\documentclass[11pt,reqno]{amsart}

\usepackage{amsmath,amssymb,mathrsfs,amsthm}
\usepackage{graphicx,cite,cases}
\usepackage{xcolor}
\setlength{\topmargin}{-1.5cm}
\setlength{\oddsidemargin}{0.0cm}
\setlength{\evensidemargin}{0.0cm}
\setlength{\textwidth}{16.7cm}
\setlength{\textheight}{23cm}
\headheight 20pt
\headsep    26pt
\footskip 0.4in

\newtheorem{theorem}{Theorem}[section]
\newtheorem{corollary}[theorem]{Corollary}

\newtheorem{proposition}[theorem]{Proposition}

\newtheorem{definition}{Definition}[section]

\numberwithin{equation}{section}

\begin{document}

\title[nonradiating sources]{Nonradiating sources of the biharmonic wave equation}

\author{Peijun Li}
\address{Department of Mathematics, Purdue University, West Lafayette, Indiana
47907, USA.}
\email{lipeijun@math.purdue.edu}

\author{Jue Wang}
\address{School of Mathematics, Hangzhou Normal University,  Hangzhou 311121, China}
\email{wangjue@hznu.edu.cn}

\thanks{The first author is supported in part by the NSF grant DMS-2208256. The second author is supported by NSFC (No.12371420), the Natural Science Foundation of Zhejiang Province (No.LY23A010004) and the Scientific Research Starting Foundation (No.2022QDL017).}

\subjclass[2010]{74J20, 35R30}

\keywords{Biharmonic wave equation, nonradiating sources, Green's functions, inverse source problem, nonuniqueness.}

\begin{abstract}
This paper offers an extensive exploration of nonradiating sources for the two- and three-dimensional biharmonic wave equations. Various equivalent characterizations are derived to reveal the nature of a nonradiating source. Additionally, we establish the connection between nonradiating sources in the biharmonic wave equation and those in the Helmholtz equation as well as the modified Helmholtz equation. Several illustrative examples are explicitly constructed to showcase the existence of nonradiating sources. One significant implication of the existence of nonradiating sources is that it undermines the uniqueness of the inverse source problem when utilizing boundary data at a fixed frequency.
\end{abstract}

\maketitle

\section{Introduction}\label{S:in}

In the study of wave propagation, two distinct types of wave sources come to light: radiating and nonradiating. Radiating sources emit waves that disperse into free space and can travel indefinitely. In contrast, nonradiating sources generate waves within a specific medium, which remain confined to a particular region in space without extending into free space. The exploration of nonradiating sources holds considerable importance across multiple domains of physics and engineering, such as geological exploration, electromagnetic detection, and stealth technology. Moreover, it plays an important role in tackling challenges posed by the inverse source problem.

Extensive research efforts have been devoted to exploring the mathematical and computational aspects of the inverse source problem concerning acoustic, electromagnetic, and elastic wave phenomena. From a mathematical perspective, the matter of uniqueness represents a substantial challenge when addressing inverse source problems. It is essential to emphasize that the inverse source problem at a specific frequency is inherently ill-posed, primarily due to the existence of nonradiating sources. Consequently, attempting to deduce the wave source uniquely based solely on a single far-field pattern or boundary data at a fixed frequency becomes an infeasible endeavor \cite{EV2009}. The issue of nonuniqueness was explored in \cite{BC1977} with regard to the inverse source problem in the fields of acoustics and electromagnetics. This work notably introduced the concept of nonradiating sources. The work presented in \cite{AM2006} addressed the inverse source problem associated with Maxwell's equations. The study demonstrated that the inverse problem of determining a volume current density from surface measurements lacks a unique solution. This nonuniqueness in solutions was clarified through the variational approach. In \cite{B2018, B2019}, nonradiating sources were investigated under a fixed frequency condition for both the acoustic and elastic wave equations, utilizing the enclosure method. The findings of the study established that nonradiating sources exhibiting a convex or non-convex corner or edge along their boundary are required to undergo vanishing behavior in those regions. In order to address the challenge posed by nonuniqueness, the work presented in \cite{BLLT2011} provides a comprehensive review of the subject matter concerning inverse scattering problems, centering on the utilization of multi-frequency boundary measurements as a strategic approach. Moreover, originating from the work in \cite{BLT2010}, the investigation into stability has received substantial attention in inverse source problems, particularly when considering multiple frequency data \cite{BLZ2020, CIL2016, LY2017, LZZ2020}. Recently, research has undertaken the exploration of the more intricate inverse source problems associated with stochastic wave equations, as exemplified by \cite{BCL2016, LLW2022} and the references cited therein.

The examination of scattering problems associated with biharmonic waves has attracted substantial interest, largely attributed to their essential applications in the field of thin plate elasticity \cite{SWW2012, FGE2009}. Nevertheless, it is worth highlighting that the inverse source problems for biharmonic wave equations demonstrate limited findings in contrast to the more extensively studied acoustic, elastic, and electromagnetic wave equations. This limitation arises due to the complex nature of solutions inherent to biharmonic equations. As a result, certain methodologies that have proven effective for second-order wave equations are rendered inadequate in this context \cite{APS2000, KI1988, YY2014}.
For the latest advancements within this research domain, we direct attention to \cite{GX2019, LW2023, LW2022, LYZ2021, LYZ2021IP}, as well as the related references presented therein.

Inspired by the research conducted by \cite{BC1977}, the present paper aims to explore the detailed characterizations of nonradiating sources for the biharmonic wave equation in both two and three dimensions. We establish a set of necessary and sufficient conditions that determine the nonradiating characteristic of a source. This is achieved by employing the integral representation of the field and expanding the fundamental solution in special functions. Additionally, we deduce a fundamental identity that establishes a direct connection between the integral transform of the source distribution and the ensuing near-field data. This outcome provides us with the capability to extract the integral transform of the source distribution by utilizing near-field data acquired from the boundary. Moreover, we conduct an extensive discussion concerning the integral equation inherent to the problem and examine the nonuniqueness associated with the inverse source problem. Our investigation reveals the presence of a nontrivial null space within the integral operator, thereby illuminating the ill-posed nature of the inverse source problem. This result offers invaluable insights for future research endeavors and practical applications. Finally, we construct several illustrative examples explicitly to demonstrate the existence of nonradiating sources.

The structure of this paper is outlined as follows. Section \ref{S:bwe} introduces the biharmonic wave equation and the problem formulation. Sections \ref{S:2d} and \ref{S:3d} establish distinct yet equivalent characterizations for the identification of nonradiating sources in two and three dimensions, respectively. In Section \ref{S:ex}, various instances of nonradiating sources are presented through illustrative examples. The conclusion and broader insights are offered in Section \ref{S:co}.

\section{The biharmonic wave equation}\label{S:bwe}

The biharmonic wave equation characterizes wave behavior by considering both the curvature and deflection of the wavefront. Mathematically, the time-harmonic biharmonic wave equation can be expressed as
\begin{align}\label{bhwEq}
\Delta^2 u -\kappa^4 u =-f\quad\text{in} ~ \mathbb{R}^d,
\end{align}
where the spatial dimension is denoted as $d=2$ or $d=3$, the operator $\Delta^2$ stands for the biharmonic operator, capturing effects of curvature and deformation, the scalar function $u$ describes the displacement amplitude of the wave, the parameter $\kappa>0$ denotes the wavenumber, related to oscillation frequency and wave propagation speed, the term $f$ represents the source function, which is assumed to have a compact support $D$ contained in the ball $B_R=\{x\in\mathbb R^d: |x|<R\}$. To ensure the well-posedness of the problem, the Sommerfeld radiation condition is imposed on both the wave field $u$ and its Laplacian $\Delta u$ (cf. \cite{TS2018}):
\begin{align}\label{SRC}
\lim_{r\to\infty} r^{\frac{d-1}{2}}(\partial_r u-{\rm i}\kappa u)=0,\quad\quad 
\lim_{r\to\infty} r^{\frac{d-1}{2}}(\partial_r \Delta u-{\rm i}\kappa\Delta u)=0,\quad\quad  r=|x|,
\end{align}
uniformly in all directions ${\hat x} =  x/|x|$. 

The analysis of far-field patterns has received significant emphasis in the field of inverse scattering theory. Considering a solution $u$ of \eqref{bhwEq}--\eqref{SRC}, it has the asymptotic behavior 
\begin{align*}%\label{uab}
u(x)=-\frac{\mu_d}{8\kappa^2}\frac{e^{{\rm i}\kappa|x|}}{(\pi|x|)^{\frac{d-1}{2}}}\bigg[u_{\infty}({\hat x})+O\left(\frac{1}{|x| }\right)\bigg],\quad \ |x|\to \infty,
\end{align*}
where $u_\infty$ is called the far-field pattern of the wave field $u$ and 
\begin{equation*}
 \mu_d=\left\{
\begin{aligned}
&\left(\frac{2}{\kappa}\right)^{1/2}e^{{\rm i}\frac{\pi}{4}}&\quad\text{if } ~ d=2,\\
&1 &\quad \text{if } ~d=3. 
\end{aligned}
\right.
\end{equation*}

Let $G(x, y)$ be the Green's function of the biharmonic wave equation, which satisfies 
\begin{align}\label{BiFSE}
\Delta^2 G(x, y)-\kappa^4 G(x, y)=-\delta(x-y),\quad x, y\in\mathbb R^d, ~ x\neq y.
\end{align}
It can be verified that $G(x, y)$ is given by 
\begin{align}\label{BiFS}
G(x, y)=-\frac{1}{2\kappa^2}\left(\Phi_{\rm H}(x, y)-\Phi_{\rm M}(x, y)\right),
\end{align}
where $\Phi_{\rm H}(x, y)$ and $\Phi_{\rm M}(x, y)$ represent the Green's functions corresponding to the Helmholtz equation and the modified Helmholtz equation, respectively, and they satisfy 
\begin{equation}\label{Phi_HM}
\Delta \Phi_{\rm H}(x, y)+\kappa^2 \Phi_{\rm H}(x, y)=-\delta(x- y),\quad
\Delta \Phi_{\rm M}(x, y)-\kappa^2 \Phi_{\rm M}(x, y)=-\delta(x- y).
\end{equation}

Using \eqref{bhwEq} and \eqref{BiFSE}, and noting that the source has a compact support $D$ contained in $B_R$,  we obtain that the solution to \eqref{bhwEq}--\eqref{SRC} can be written as
\begin{align}\label{Bihs}
u(x)=\int_{B_R}G(x, y)f(y){\rm d}y,\quad  x\in\mathbb R^d. 
\end{align}

Nonradiating sources, referring to sources where the field remains consistently zero beyond a finite region, have a strong association with the nonuniqueness issues encountered in the inverse source problems. The inverse source problem aims to determine the source function $f$ by utilizing measurements associated with the wave field $u$ on the boundary 
$\partial B_R$. Moving forward, our focus is directed towards the introduction and subsequent examination of nonradiating sources for the biharmonic wave equation. 

Let us present a formal definition of a nonradiating source in the context of the biharmonic wave equation.

\begin{definition}\label{def}
Consider a source denoted as $f$, having a compact support designated as $D$, which is entirely confined within the ball $B_R$. A source is classified as nonradiating if the solution $u$ to equation \eqref{Bihs} is identically zero outside of the ball $B_R$.
\end{definition}

In the following two sections, we examine the two- and three-dimensional problems, respectively. In each scenario, we provide distinct yet equivalent characterizations of nonradiating sources. We then proceed to investigate these characterizations utilizing near-field data, which is followed by a discussion of the nonuniqueness inherent in the inverse source problem. Our analysis relies on the utilization of expansions involving Bessel functions of various types within the framework of the fundamental solutions for the Helmholtz and modified Helmholtz equations. This method finds its roots in the work conducted by \cite{BC1977}, where it was employed to analyze nonradiating sources within the context of the three-dimensional Helmholtz equation.

\section{The two-dimensional problem}\label{S:2d}

In this section, we discuss different characterizations of nonradiating sources for the two-dimensional biharmonic wave equation.

\subsection{Characterizations of nonradiating sources}

In two dimensions, the explicit expressions for the Green's functions corresponding to the Helmholtz and modified Helmholtz equations, as defined in \eqref{Phi_HM}, are given by
\begin{align*}%\label{hG}
\Phi_{\rm H}(x, y)=\frac{\rm i}{4}H_{0}^{(1)}(\kappa|x-y|),\quad
\Phi_{\rm M}(x, y)=\frac{\rm i}{4}H_{0}^{(1)}({\rm i}\kappa|x- y|),
\end{align*}
where $H_{0}^{(1)}$ is the Hankel function of the first kind with order $0$.

Define a pair of auxiliary functions 
\begin{align}\label{Binse-0}
f_{\rm H}(x)=-\int_{B_{R}}\Phi_{\rm H}(x, y)f(y){\rm d}y,\quad
f_{\rm M}(x)=-\int_{B_{R}}\Phi_{\rm M}(x, y)f(y){\rm d}y,\quad x\in\mathbb{R}^2.
\end{align}
It is clear to note that $f_{\rm H}$ and $f_{\rm M}$ are the radiation solutions to the Helmholtz and modified Helmholtz equations, respectively, i.e., they satisfy 
\begin{equation*}%\label{fHM}
 \Delta f_{\rm H}+\kappa^2 f_{\rm H}=f,\quad \Delta f_{\rm M}-\kappa^2 f_{\rm M}=f. 
\end{equation*}
Furthermore, both solutions satisfy the Sommerfeld radiation condition.

Combining \eqref{BiFS}, \eqref{Bihs}, and \eqref{Binse-0}, we obtain 
\begin{align}\label{Binse}
u(x)&=\int_{B_{R}}G(x,y)f(y){\rm d}y
=-\frac{1}{2\kappa^2}\int_{B_{R}}\big(\Phi_{\rm H}(x,y)-\Phi_{\rm M}(x,y)\big)f(y){\rm d}y\nonumber\\
&=\frac{1}{2\kappa^2}\big(f_{\rm H}(x)-f_{\rm M}(x)\big).
\end{align}

First, we establish the connection between a nonradiating source associated with the biharmonic wave equation and a nonradiating source related to both the Helmholtz equation and the modified Helmholtz equation through the subsequent theorem. 

\begin{theorem}\label{2DThe-nrs}
The source $f$ is nonradiating for the biharmonic wave equation if and only if $f$ is nonradiating for both the Helmholtz equation and the modified Helmholtz equation, i.e., $f_{\rm H}$ and $f_{\rm M}$ defined in \eqref{Binse-0} are identically zero outside of the ball $B_R$.
\end{theorem}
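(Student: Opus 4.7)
The proof hinges on the identity
\[
u(x) = \frac{1}{2\kappa^2}\bigl(f_{\rm H}(x) - f_{\rm M}(x)\bigr), \qquad x \in \mathbb{R}^2,
\]
already established in \eqref{Binse}. The plan is to exploit the fact that $f_{\rm H}$ and $f_{\rm M}$ satisfy \emph{different} homogeneous PDEs outside the support of $f$, which forces any coincidence between them exterior to $B_R$ to be the trivial one.

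The sufficiency direction is immediate: if $f_{\rm H}$ and $f_{\rm M}$ both vanish identically outside $B_R$, then \eqref{Binse} gives $u \equiv 0$ outside $B_R$, so $f$ is nonradiating for the biharmonic wave equation.

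For the necessity direction, I would assume $u \equiv 0$ outside $B_R$ and deduce from \eqref{Binse} that $f_{\rm H}(x) = f_{\rm M}(x)$ for every $x \in \mathbb{R}^2 \setminus \overline{B_R}$. Since $f$ is supported in $B_R$, the definitions \eqref{Binse-0} and the PDE properties recalled just after them imply
\[
\Delta f_{\rm H} + \kappa^2 f_{\rm H} = 0 \quad \text{and} \quad \Delta f_{\rm M} - \kappa^2 f_{\rm M} = 0 \quad \text{in } \mathbb{R}^2 \setminus \overline{B_R}.
\]
Substituting $f_{\rm H} = f_{\rm M}$ into the two homogeneous equations and subtracting gives $2\kappa^2 f_{\rm H} = 0$ outside $B_R$. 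Since $\kappa > 0$, this yields $f_{\rm H} \equiv 0$ outside $B_R$, and consequently $f_{\rm M} \equiv 0$ there as well.

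The argument is short and no real obstacle is anticipated; the only point requiring care is making sure the two homogeneous PDEs are genuinely available outside $B_R$, which follows directly from the compact support assumption on $f$. No appeal to uniqueness for the radiation problem or to Sommerfeld conditions is needed here, because the conclusion is extracted algebraically from the mismatch between the Helmholtz and modified Helmholtz operators once $f_{\rm H}$ and $f_{\rm M}$ are shown to agree outside $B_R$.
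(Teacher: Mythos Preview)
Your proof is correct, and it takes a genuinely different route from the paper's. For the necessity direction, the paper argues via far-field asymptotics: it expands $\Phi_{\rm H}$ and $\Phi_{\rm M}$ as $|x|\to\infty$, observes that $\Phi_{\rm M}$ decays exponentially while $\Phi_{\rm H}$ decays only like $|x|^{-1/2}$, and concludes that the leading-order far-field of $u$ is $-\frac{1}{8\kappa^2}e^{{\rm i}(\kappa|x|+\pi/4)}(2/\pi\kappa|x|)^{1/2}\int_{B_R}e^{-{\rm i}\kappa\hat x\cdot y}f(y)\,{\rm d}y$, which it then uses to rule out the possibility that $f_{\rm H}$ and $f_{\rm M}$ are both nonzero but equal outside $B_R$. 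Your argument is purely local and algebraic: since $f_{\rm H}=f_{\rm M}$ outside $B_R$ satisfies both $(\Delta+\kappa^2)f_{\rm H}=0$ and $(\Delta-\kappa^2)f_{\rm H}=0$ there, subtraction gives $2\kappa^2 f_{\rm H}=0$. This is shorter, avoids any asymptotic analysis or appeal to radiation conditions, and is arguably more robust---in particular, it does not require one to check separately that the far-field integral $\int_{B_R}e^{-{\rm i}\kappa\hat x\cdot y}f(y)\,{\rm d}y$ is nonzero, and it handles all cases at once rather than via the dichotomy the paper sets up. The paper's approach, on the other hand, makes the connection to the far-field pattern explicit, which is thematically useful for the inverse-problem discussion later in the paper.
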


\begin{proof}
If $f$ is nonradiating for both the Helmholtz equation and the modified Helmholtz equation, then we have from \eqref{Binse-0} that $f_{\rm H}(x)= f_{\rm M}(x)=0$ for $|x|>R$. It follows from \eqref{Binse} that $u(x)=0$ for $|x|>R$, which shows that $f$ is radiating for the biharmonic wave equation. 

Conversely, if the source $f$ is nonradiating concerning the biharmonic wave equation, then based on \eqref{Binse}, it can be deduced that $u(x)=0$ for $|x|>R$. Upon considering \eqref{Binse}, two scenarios emerge: (1) where $f_{\rm H}(x)$ and $f_{\rm M}(x)$ are both non-zero for $|x|> R$, yet satisfy $f_{\rm H}(x)=f_{\rm M}(x)$ for all $x$ satisfying $|x|>R$; and (2) where $f_{\rm H}(x)=f_{\rm M}(x)=0$ for all $|x|>R$. We demonstrate that the former scenario is impossible, while the latter scenario indicates that $f$ is nonradiating in relation to both the Helmholtz equation and the modified Helmholtz equation.

When $f_{\rm H}(x)\neq 0, f_{\rm M}(x)\neq 0$ for $|x| > R$, it is clear to note that the source $f(x)$ does not qualify as a nonradiating source for either the Helmholtz equation or the modified Helmholtz equation. By the asymptotic expansion
\begin{align}\label{2Dax-y}
|x-y|= |x|- {\hat x} \cdot y+O\left(\frac{1}{|{x}|}\right),\quad |x|>|y|, ~ |x|\to \infty,
\end{align}
it can be verified for $|x|>|y|, |x|\to \infty$ that the Green's functions $\Phi_{\rm H}$ and $\Phi_{\rm M}$ admit the asymptotic expansions
\begin{align}
\Phi_{\rm H}(x, y)=\frac{1}{4}\left(\frac{2}{\pi\kappa| {x}|}\right)^{1/2}
e^{{\rm i}(\kappa|x|+\frac{\pi}{4})}\bigg[e^{-{\rm i}\kappa {\hat x} \cdot y}+O\left(\frac{1}{|{x}| }\right)\bigg] \label{2Dasb}
\end{align}
and
\begin{align}
\Phi_{\rm M}(x, y)=\frac{1}{4}\left(\frac{2}{{\rm i}\pi\kappa| {x}|}\right)^{1/2}e^{(-\kappa|x|+{\rm i}\frac{\pi}{4})}~\bigg[e^{\kappa {\hat x} \cdot y}+O\left(\frac{1}{|{x}|}\right)\bigg].\label{2Dasb-1}
\end{align}
Hence, we have
\begin{align}\label{2DFSab}
G(x,y)
&=-\frac{1}{8\kappa^2}e^{{\rm i}(\kappa|x|+\frac{\pi}{4})}\left(\frac{2}{\pi\kappa| {x}|}\right)^{1/2}
\bigg[e^{-{\rm i}\kappa {\hat x} \cdot y}+O\left(\frac{1}{|{x}|}\right)\bigg],\quad\ |x|>|y|, ~ |x|\to \infty.
\end{align}
Combining \eqref{Binse} and \eqref{2DFSab} yields 
\begin{align*}%\label{IM-IH}
u(x)=-\frac{1}{8\kappa^2}e^{{\rm i}(\kappa|x|+\frac{\pi}{4})}\left(\frac{2}{\pi\kappa| {x}|}\right)^{1/2}
\bigg[\int_{B_R}e^{-{\rm i}\kappa {\hat x} \cdot y}f(y){\rm d}y+O\left(\frac{1}{| {x}| }\right)\bigg],\quad  |x|\to \infty,
\end{align*}
which implies that $u(x)\not\equiv 0$ for $|x| > R$. Consequently, the first scenario is thereby eliminated, leading to the conclusion of the proof. 
\end{proof}

In order to gain a more comprehensive understanding of the nonradiating source, we undertake an additional analysis, exploring how its reliance on the projection of specific coefficients influences the nonradiating source.

Let $H_n^{(1)}=J_n+{\rm i}Y_n$ be the Hankel function of the first kind with order $n$, where $J_n$ and $Y_n$ are the Bessel and Neumann functions of order $n$, respectively. When $|x|>|y|$, we utilize Graf's addition theorem (cf. \cite[$(3.65)$]{DR-2013}) to express the functions $\Phi_{\rm H}(x,y)$ and $\Phi_{\rm M}(x,y)$ as a combination of multipolar sources centered at the origin:
\begin{align}
\Phi_{\rm H}(x,y)&=\frac{\rm i}{4}H_{0}^{(1)}(\kappa|x-y|)
=\frac{\rm i}{4}\sum_{n=-\infty}^{\infty}H_{n}^{(1)}(\kappa|x|)e^{{\rm i} n {\rm arg}(x)}J_{n}(\kappa|y|)e^{-{\rm i} n {\rm arg}(y)},\label{Graf-1}\\
\Phi_{\rm M}(x,y)&=\frac{\rm i}{4}H_{0}^{(1)}({\rm i}\kappa|x-y|)
=\frac{\rm i}{4}\sum_{n=-\infty}^{\infty}H_{n}^{(1)}({\rm i}\kappa|x|) e^{{\rm i} n {\rm arg}(x)}J_{n}({\rm i}\kappa|y|) e^{-{\rm i} n {\rm arg}(y)},\label{Graf-2}
\end{align}
where ${\rm arg}(x)$ represents the counterclockwise oriented angle formed by the vector $(1, 0)$ and the vector $x$.
It is noteworthy to mention that in \eqref{Graf-2}, the Hankel function $H_n^{(1)}$ and the Bessel function $J_n$ with purely imaginary arguments can be substituted with the modified Hankel functions $K_n$ and $I_n$.

Assume $f\in L^2(B_R)$ and let 
\[
 f_n(r)=\frac{1}{2\pi}\int_{0}^{2\pi} f(x) e^{- {\rm i} n \theta}{\rm d}\theta.
\]
It is evident that the coefficients $f_n$ represent the Fourier coefficients of the function $f$ in the following expansion:
\[
 f(x)=\sum_{n=-\infty}^\infty f_n(r) e^{{\rm i} n\theta},\quad |x|<R. 
\]
By employing the Fourier coefficients $f_n$, we define the parameters $\alpha_n$ and $\beta_n$ as follows:
\begin{equation}\label{abn}
 \alpha_n=\int_{0}^{R}f_n(r)J_{n}(\kappa r)r{\rm d}r,\quad \beta_n=\int_{0}^{R}f_n(r)J_{n}({\rm i}\kappa r)r{\rm d}r.
\end{equation}

Presented below is an alternative characterization of nonradiating sources.

\begin{theorem}\label{2DThe-cn}
Assume that $f\in L^2(B_R)$. Then the source $f$ is nonradiating if and only if $\alpha_n=\beta_n=0$ for all $n\in \mathbb{Z}$.
\end{theorem}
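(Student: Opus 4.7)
The plan is to reduce to the two scalar Helmholtz-type problems via Theorem \ref{2DThe-nrs}, and then expand everything in the orthogonal basis $\{e^{{\rm i} n\theta}\}$ using Graf's addition theorem. By Theorem \ref{2DThe-nrs}, $f$ is nonradiating for the biharmonic equation if and only if both $f_{\rm H}$ and $f_{\rm M}$ defined in \eqref{Binse-0} vanish outside $B_R$. Thus the task reduces to characterizing these two vanishing conditions separately in terms of the integrals $\alpha_n$ and $\beta_n$ from \eqref{abn}.

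First I would take $|x|>R$, so that $|x|>|y|$ for every $y$ in the support of $f$, and substitute the Graf expansion \eqref{Graf-1} into the definition of $f_{\rm H}$. Writing $y=(r,\theta)$ in polar coordinates and inserting the Fourier series $f(y)=\sum_m f_m(r)e^{{\rm i} m\theta}$, the angular integral $\int_0^{2\pi}e^{{\rm i}(m-n)\theta}\,{\rm d}\theta=2\pi\delta_{mn}$ collapses the double sum, giving
\begin{equation*}
f_{\rm H}(x)=-\frac{{\rm i}\pi}{2}\sum_{n=-\infty}^{\infty}\alpha_n H_n^{(1)}(\kappa|x|)e^{{\rm i} n\,{\rm arg}(x)},\qquad |x|>R.
\end{equation*}
An identical computation with \eqref{Graf-2} yields
\begin{equation*}
f_{\rm M}(x)=-\frac{{\rm i}\pi}{2}\sum_{n=-\infty}^{\infty}\beta_n H_n^{(1)}({\rm i}\kappa|x|)e^{{\rm i} n\,{\rm arg}(x)},\qquad |x|>R.
\end{equation*}

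The sufficiency is then immediate: if $\alpha_n=\beta_n=0$ for every $n$, both series vanish, so $f_{\rm H}\equiv f_{\rm M}\equiv 0$ outside $B_R$ and Theorem \ref{2DThe-nrs} applies. For the necessity, assume $f$ is nonradiating; then by Theorem \ref{2DThe-nrs} the two series above vanish identically for $|x|>R$. Restricting to a circle of radius $r>R$ and invoking the orthogonality of $\{e^{{\rm i} n\theta}\}$ on $[0,2\pi]$ forces $\alpha_n H_n^{(1)}(\kappa r)=0$ and $\beta_n H_n^{(1)}({\rm i}\kappa r)=0$ for every $n\in\mathbb Z$ and every $r>R$. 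Since $H_n^{(1)}(\kappa r)$ and $H_n^{(1)}({\rm i}\kappa r)=\tfrac{2}{\pi}(-{\rm i})^{n+1}K_n(\kappa r)$ are real-analytic and nonzero functions of $r$ (in fact $K_n(\kappa r)>0$), we conclude $\alpha_n=\beta_n=0$ for all $n$.

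The main obstacle will be the rigorous justification of the term-by-term integration that produced the two displayed series. This is where the hypothesis $f\in L^2(B_R)$ enters: for fixed $|x|>R$ the Graf series converges absolutely and uniformly on compact subsets of $\{|y|<|x|\}$ (the standard bound $|H_n^{(1)}(\kappa|x|)J_n(\kappa|y|)|$ decays geometrically in $n$), so Fubini combined with the $L^2$ Parseval identity for the Fourier expansion of $f$ legitimizes the exchange of summation and integration. Once this technical point is dispatched, the argument above completes the proof.
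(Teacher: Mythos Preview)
Your proposal is correct and follows essentially the same route as the paper: reduce to the two Helmholtz-type problems via Theorem~\ref{2DThe-nrs}, insert the Graf expansions \eqref{Graf-1}--\eqref{Graf-2} into \eqref{Binse-0} to obtain the series for $f_{\rm H}$ and $f_{\rm M}$ with coefficients $\alpha_n,\beta_n$, and read off the equivalence. The paper's own proof is actually terser than yours---it simply writes down the expansions \eqref{fMs}--\eqref{fHs} and appeals to Theorem~\ref{2DThe-nrs} without spelling out the orthogonality/non-vanishing step or the Fubini justification you include.
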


\begin{proof}
Combining \eqref{Binse-0} and \eqref{Graf-1}, we obtain 
\begin{align}\label{fMs}
f_{\rm H}(x)
&=-\int_{B_{R}}\Phi_{\rm H}(x,y)f(y){\rm d}y
=-\frac{\rm i}{4}\sum_{n=-\infty}^{\infty}H_{n}^{(1)}(\kappa|x|)e^{{\rm i} n {\rm arg}(x)}\int_{B_{R}} f(y) J_{n}(\kappa|y|)e^{-{\rm i} n {\rm arg}(y)}{\rm d}y\nonumber\\
&=-\frac{\rm i}{4}\sum_{n=-\infty}^{\infty}H_{n}^{(1)}(\kappa|x|)e^{{\rm i} n {\rm arg}(x)}\int_{0}^{R}\int_{0}^{2\pi} f(y) J_{n}(\kappa r)e^{-{\rm i} n \theta}r{\rm d}\theta{\rm d}r\nonumber\\
&=-\frac{{\rm i}\pi}{2}\sum_{n=-\infty}^{\infty}\alpha_n H_{n}^{(1)}(\kappa|x|)e^{{\rm i} n {\rm arg}(x)},\quad |x| > R.
\end{align}

Similarly, we have from \eqref{Binse-0} and \eqref{Graf-2} that 
\begin{align}\label{fHs}
f_{\rm M}(x)
&=-\int_{B_{R}}\Phi_{\rm M}(x,y)f(y){\rm d}y
=-\frac{\rm i}{4}\sum_{n=-\infty}^{\infty}H_{n}^{(1)}({\rm i}\kappa|x|)e^{{\rm i} n {\rm arg}(x)}\int_{B_{R}} f(y) J_{n}({\rm i}\kappa|y|)e^{-{\rm i} n {\rm arg}(y)}{\rm d}y\nonumber\\
&=-\frac{\rm i}{4}\sum_{n=-\infty}^{\infty}H_{n}^{(1)}({\rm i}\kappa|x|)e^{{\rm i} n {\rm arg}(x)}\int_{0}^{R}\int_{0}^{2\pi} f(y) J_{n}({\rm i}\kappa r)e^{-{\rm i} n \theta}r{\rm d}\theta{\rm d}r\nonumber\\
&=-\frac{{\rm i}\pi}{2}\sum_{n=-\infty}^{\infty}\beta_n H_{n}^{(1)}({\rm i}\kappa|x|)e^{{\rm i} n {\rm arg}(x)},\quad |x| > R. 
\end{align}
The proof is completed by noting Theorem \ref{2DThe-nrs} in conjunction with \eqref{fMs}--\eqref{fHs}.
\end{proof}

If $f\in L^2(B_R)$, we can deduce from \cite[Theorem 8.2]{DR-2013} and \eqref{Binse-0} that both $f_{\rm H}$ and $f_{\rm M}$ belong to the space $H^2_{\rm loc}(\mathbb R^2)$. Consequently, the traces of the functions $f_{\rm H}|_{\partial B_R}$ and $f_{\rm M}|_{\partial B_R}$ can be found in the space $H^{\frac{3}{2}}(\partial B_R)$, and their respective normal derivatives, $\partial_\nu f_{\rm H}|_{\partial B_R}$ and $\partial_\nu f_{\rm M}|_{\partial B_R}$, are contained in $H^{\frac{1}{2}}(\partial B_R)$.

\begin{proposition}\label{2DThe-fHct}
Assume that $f\in L^2(B_R)$. Then $f_{\rm M}(x)|_{\partial B_R}=0$ if and only if $\beta_n=0$ for all $n\in \mathbb{Z}$.
\end{proposition}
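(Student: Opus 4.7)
The plan is to invoke the series expansion of $f_{\rm M}$ obtained in the proof of Theorem \ref{2DThe-cn} and then reduce the statement to uniqueness of Fourier series on $\partial B_R$ together with the nonvanishing of $H_n^{(1)}({\rm i}\kappa R)$. Concretely, equation \eqref{fHs} gives, for $|x|>R$,
\[
f_{\rm M}(x)=-\frac{{\rm i}\pi}{2}\sum_{n=-\infty}^{\infty}\beta_n H_n^{(1)}({\rm i}\kappa|x|)\,e^{{\rm i}n\,{\rm arg}(x)}.
\]
Because the source $f$ has compact support $D$ strictly inside $B_R$, there is $R_0<R$ with $D\subset B_{R_0}$, so Graf's addition theorem in fact validates this expansion on the whole region $|x|>R_0$, and in particular up to and including $\partial B_R$. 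Since $f\in L^2(B_R)$ implies $f_{\rm M}\in H^2_{\rm loc}(\mathbb{R}^2)$ (hence continuous in two space dimensions), its $H^{3/2}(\partial B_R)$ trace agrees with the pointwise boundary value of the above series.

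Writing $x=R\hat x$ with $\hat x=(\cos\theta,\sin\theta)$, I would then obtain
\[
f_{\rm M}(x)\big|_{\partial B_R}=-\frac{{\rm i}\pi}{2}\sum_{n=-\infty}^{\infty}\beta_n H_n^{(1)}({\rm i}\kappa R)\,e^{{\rm i}n\theta},
\]
and the proposition reduces to a Fourier argument on the circle: by orthogonality of $\{e^{{\rm i}n\theta}\}_{n\in\mathbb Z}$ in $L^2(0,2\pi)$, the trace vanishes if and only if $\beta_n H_n^{(1)}({\rm i}\kappa R)=0$ for every $n\in\mathbb Z$. The factor $H_n^{(1)}({\rm i}\kappa R)$ is a nonzero constant multiple of the modified Bessel function $K_n(\kappa R)$, which is strictly positive, so $\beta_n H_n^{(1)}({\rm i}\kappa R)=0$ is equivalent to $\beta_n=0$. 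Both directions of the equivalence follow simultaneously.

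The main obstacle is the justification of the series expansion on $\partial B_R$ itself, since Graf's theorem in \eqref{Graf-2} is stated for $|x|>|y|$. This is handled by exploiting the strict containment $\operatorname{supp}(f)=D\subset B_R$ and the regularity $f_{\rm M}\in H^2_{\rm loc}(\mathbb{R}^2)$, which together let me identify the $H^{3/2}$ trace with the pointwise boundary value of the convergent series. Everything else reduces to Fourier orthogonality and the explicit identity relating $H_n^{(1)}({\rm i}\,\cdot\,)$ to $K_n$.
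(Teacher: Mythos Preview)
Your argument is correct but follows a genuinely different route from the paper. The paper splits the two implications: for the direction $f_{\rm M}|_{\partial B_R}=0\Rightarrow\beta_n=0$, it observes that $f_{\rm M}$ solves the homogeneous modified Helmholtz equation in $\mathbb{R}^2\setminus\overline{B_R}$, satisfies the Sommerfeld radiation condition, and vanishes on $\partial B_R$, so by uniqueness for the exterior Dirichlet problem $f_{\rm M}\equiv 0$ in $|x|>R$, and then \eqref{fHs} forces $\beta_n=0$. For the converse it argues that $\beta_n=0$ in \eqref{fHs} gives $f_{\rm M}=0$ for $|x|>R$, and then the $H^{3/2}$ trace regularity transfers this to $\partial B_R$. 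Your approach instead pushes the series expansion \eqref{fHs} down to $\partial B_R$ itself (using the strict containment $D\subset B_{R_0}\subset B_R$, which is part of the standing hypotheses) and reads off both implications at once from Fourier orthogonality and the nonvanishing of $H_n^{(1)}({\rm i}\kappa R)$ via the relation to $K_n(\kappa R)>0$. Your route is more elementary and symmetric, avoiding the exterior uniqueness theorem entirely; the paper's route, by contrast, does not need to invoke the explicit nonvanishing of $H_n^{(1)}({\rm i}\kappa R)$ and would adapt more readily to settings where such pointwise information about special functions is unavailable.
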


\begin{proof}
Given that the source $f$ has a compact support $D$ confined within $B_R$, the function $f_{\rm M}$ satisfies the equation $\Delta f_{\rm M}-\kappa^2 f_{\rm M}=0$ in $\mathbb{R}^2\setminus \overline{B_R}$. By \eqref{Binse-0} and \eqref{2Dasb-1}, it becomes evident that $f_{\rm M}$ satisfies the Sommerfeld radiation condition. With the information that $f_{\rm M}(x)|_{\partial B_R}=0$ and considering the Sommerfeld radiation condition, we conclude that $f_{\rm M}(x)=0$ for $|x|>R$. It follows from \eqref{fHs} that $\beta_n=0$ holds true for all $n\in \mathbb{Z}$.

Conversely, based on the fact that $\beta_n=0$ for all $n\in \mathbb{Z}$ and \eqref{fHs}, it can be deduced that $f_{\rm M}(x)=0$ when $|x|>R$, which, with the information that $f_{\rm M}|_{\partial B_R}\in H^{\frac{3}{2}}(\partial B_R)$, leads us to the conclusion that 
$f_{\rm M}|_{\partial B_R}=0$.
\end{proof}

Similarly, the following proposition can be demonstrated.

\begin{proposition}\label{2DThefnf}
Assume that $f(x)\in L^2(B_R)$. Then $f_{\rm H}(x)|_{\partial B_R}=0$ if and only if $\alpha_n=0$ for all $n\in \mathbb{Z}$.
\end{proposition}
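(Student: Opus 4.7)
The plan is to mirror the proof of Proposition \ref{2DThe-fHct} almost verbatim, with the modified Helmholtz equation replaced by the Helmholtz equation and $(f_{\rm M},\beta_n)$ replaced by $(f_{\rm H},\alpha_n)$. The key ingredients are already in place: by \eqref{Binse-0} and the compact support of $f$, the function $f_{\rm H}$ solves $\Delta f_{\rm H}+\kappa^2 f_{\rm H}=0$ in $\mathbb{R}^2\setminus\overline{B_R}$, it satisfies the Sommerfeld radiation condition (built into the Green's function $\Phi_{\rm H}$, as reflected in \eqref{2Dasb}), and it lies in $H^2_{\rm loc}(\mathbb{R}^2)$ with a well-defined trace on $\partial B_R$.

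For the forward direction, I would assume $f_{\rm H}|_{\partial B_R}=0$ and invoke uniqueness for the exterior Dirichlet problem of the Helmholtz equation with the Sommerfeld radiation condition (a consequence of Rellich's lemma) to conclude that $f_{\rm H}(x)=0$ for all $|x|>R$. Then I would substitute into the series representation \eqref{fMs},
\begin{equation*}
f_{\rm H}(x)=-\frac{{\rm i}\pi}{2}\sum_{n=-\infty}^{\infty}\alpha_n H_n^{(1)}(\kappa|x|)e^{{\rm i}n\,{\rm arg}(x)},\qquad |x|>R,
\end{equation*}
and equate this identically to zero on any circle of radius $r_0>R$. By uniqueness of the Fourier expansion in $\theta$, each term $\alpha_n H_n^{(1)}(\kappa r_0)$ vanishes; since $H_n^{(1)}$ has no zeros on the positive real axis, this forces $\alpha_n=0$ for every $n\in\mathbb{Z}$.

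For the converse, assuming $\alpha_n=0$ for all $n\in\mathbb{Z}$, the series representation \eqref{fMs} immediately gives $f_{\rm H}(x)=0$ for $|x|>R$. Since $f_{\rm H}\in H^2_{\rm loc}(\mathbb{R}^2)$, the exterior trace is well defined and agrees with the interior one, yielding $f_{\rm H}|_{\partial B_R}=0$ as a function in $H^{\frac{3}{2}}(\partial B_R)$. I do not foresee any substantial obstacle: the only subtlety is the invocation of uniqueness for the exterior Dirichlet problem for the Helmholtz equation (which does hold for every $\kappa>0$ under the Sommerfeld radiation condition, unlike the interior problem which fails at Dirichlet eigenvalues), together with the fact that $H_n^{(1)}(\kappa r)\neq 0$ for $r>0$, both standard facts.
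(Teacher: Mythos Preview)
Your proposal is correct and follows essentially the same approach as the paper, which simply states that Proposition~\ref{2DThefnf} can be demonstrated similarly to Proposition~\ref{2DThe-fHct}. The additional details you supply---invoking Rellich's lemma for uniqueness of the exterior Helmholtz Dirichlet problem and the nonvanishing of $H_n^{(1)}$ on the positive real axis---make the argument more explicit but do not depart from the paper's intended route.
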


By combining Theorem \ref{2DThe-cn} with Propositions \ref{2DThe-fHct} and \ref{2DThefnf}, we formulate an equivalent characterization of nonradiating sources.

\begin{corollary}\label{2DThefnf-1}
The source $f\in L^2(B_R)$ is nonradiating if and only if $f_{\rm H}(x)|_{\partial B_R}=f_{\rm M}(x)|_{\partial B_R}=0$.
\end{corollary}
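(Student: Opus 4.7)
The plan is to obtain the corollary as a purely logical consequence of the three equivalences already in place, namely Theorem \ref{2DThe-cn}, Proposition \ref{2DThe-fHct}, and Proposition \ref{2DThefnf}. Theorem \ref{2DThe-cn} characterizes the nonradiating property of $f$ by the simultaneous vanishing of the coefficient families $\{\alpha_n\}_{n\in\mathbb{Z}}$ and $\{\beta_n\}_{n\in\mathbb{Z}}$, while the two propositions independently translate these coefficient conditions into boundary traces: $\{\alpha_n\}$ corresponds to $f_{\rm H}|_{\partial B_R}$ and $\{\beta_n\}$ to $f_{\rm M}|_{\partial B_R}$. The task therefore reduces to chaining these iff-statements.

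For the forward direction I would start from $f$ nonradiating, invoke Theorem \ref{2DThe-cn} to extract $\alpha_n=\beta_n=0$ for every $n\in\mathbb{Z}$, then feed the vanishing of $\{\alpha_n\}$ into Proposition \ref{2DThefnf} to get $f_{\rm H}|_{\partial B_R}=0$ and the vanishing of $\{\beta_n\}$ into Proposition \ref{2DThe-fHct} to get $f_{\rm M}|_{\partial B_R}=0$. For the converse I would reverse each of these implications: starting from $f_{\rm H}|_{\partial B_R}=f_{\rm M}|_{\partial B_R}=0$, Propositions \ref{2DThefnf} and \ref{2DThe-fHct} give $\alpha_n=0$ and $\beta_n=0$ for all $n$, and a final appeal to Theorem \ref{2DThe-cn} recovers that $f$ is nonradiating.

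I do not anticipate a real obstacle here, since every link in the chain is already an established equivalence and the two coefficient families decouple cleanly through the separate series expansions \eqref{fMs} and \eqref{fHs}. The only minor point worth flagging in the write-up is that $\alpha_n$ and $\beta_n$ are independent in the sense that one can be made to vanish without the other, which justifies treating $f_{\rm H}|_{\partial B_R}$ and $f_{\rm M}|_{\partial B_R}$ as independent pieces of boundary data. With that observation, the proof amounts to a one-line recombination of the preceding results.
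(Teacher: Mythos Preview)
Your proposal is correct and follows exactly the paper's own approach: the corollary is stated in the paper as an immediate consequence of combining Theorem \ref{2DThe-cn} with Propositions \ref{2DThe-fHct} and \ref{2DThefnf}, which is precisely the chain of equivalences you describe.
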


Subsequently, we move forward to establish an alternative characterization of nonradiating sources from the perspective of integral transforms.

First, we consider the Fourier transform of the source, i.e.,
\begin{align}\label{FEf}
\hat{f}(\xi)=\int_{B_{R}}f(x) e^{-{\rm i} \xi \cdot x}{\rm d}x,
\end{align}
where $\xi\in\mathbb{R}^2$ is the spatial frequency. Recall the Jacobi--Anger expansion for the plane wave (cf. \cite[$(3.89)$]{DR-2013}):
\begin{align}\label{PWJAE}
e^{{\rm i}\kappa{x}\cdot{ d}}
=\sum_{n=-\infty}^{+\infty}{\rm i}^{n}e^{-{\rm i} n{\rm arg}( d)}J_{n}(\kappa|x|)e^{{\rm i} n{\rm arg}(x)}
,\quad x\in\mathbb{R}^2,
\end{align}
where $ d\in\mathbb R^2$ is the unit vector of propagation direction. 

Using \eqref{PWJAE}, we have from a simple calculation that 
\begin{align}\label{PWJAE-1}
e^{-{\rm i}{ \xi}\cdot{x}}
=\overline{e^{{\rm i}{ \xi}\cdot{x}}}
=\sum_{n=-\infty}^{+\infty}(-{\rm i})^{n}e^{{\rm i} n{\rm arg}(\hat{ \xi})}J_{n}(|{ \xi}||x|)e^{-{\rm i} n{\rm arg}(x)}, 
\end{align}
where $\hat{ \xi}={ \xi}/|{ \xi}|$. Substituting \eqref{PWJAE-1} into \eqref{FEf} yields
\begin{align*}%\label{FEfd-Re}
\hat{f}(\xi)
&=\int_{B_{R}}f(x) \bigg(\sum_{n=-\infty}^{+\infty}(-{\rm i})^{n}e^{{\rm i} n{\rm arg}(\hat{ \xi})}J_{n}(|{ \xi}||x|)e^{-{\rm i} n{\rm arg}(x)}\bigg){\rm d}x
\nonumber\\
&=2\pi\sum_{n=-\infty}^{+\infty}(-{\rm i})^{n}e^{{\rm i} n{\rm arg}(\hat{ \xi})}\int_{0}^{R}J_{n}(|{ \xi}|r)r\bigg(\frac{1}{2\pi}\int_{0}^{2\pi}f(x) e^{-{\rm i} n\theta}{\rm d} \theta\bigg){\rm d}r
\nonumber\\
&=2\pi\sum_{n=-\infty}^{+\infty}(-{\rm i})^{n}e^{{\rm i} n{\rm arg}(\hat{ \xi})}\int_{0}^{R}f_n(r)J_{n}(|{ \xi}|r)r{\rm d}r,
\end{align*}
which implies, upon employing the definition of $\alpha_n$ in \eqref{abn}, that
\begin{align}\label{xik}
\hat{f}(\xi)
=2\pi\sum_{n=-\infty}^{+\infty}(-{\rm i})^{n}\alpha_n e^{{\rm i} n{\rm arg}(\hat{ \xi})},\quad|\xi|=\kappa.
\end{align}

It follows from \eqref{PWJAE} and \cite[Proposition 3.1]{WZC-2021} that 
\begin{align*}%\label{Lp-3}
e^{-\kappa{x}\cdot{ d}}
=e^{{\rm i}({\rm i}\kappa){x}\cdot{ d}}
=\sum_{n=-\infty}^{+\infty}{{\rm i}}^{n}e^{-{\rm i} n{\rm arg}( d)}J_{n}({\rm i}\kappa|x|)e^{{\rm i} n{\rm arg}(x)},\quad x\in\mathbb{R}^2. 
\end{align*}
Then, for $|x|=r\leq R$ and $0<| s|< \kappa+K$, where $s\in\mathbb R^2$ and $K>0$ is a bounded constant, we have
\begin{align}\label{Lp-4}
e^{-{ s}\cdot{x}}=e^{-{|x| s}\cdot{ {\hat x}}}
=e^{-{r s}\cdot{ {\hat x}}}=\sum_{n=-\infty}^{+\infty}{{\rm i}}^{n}e^{{\rm i} n{\rm arg}( s)}J_{n}({\rm i}| s|r)e^{-{\rm i} n{\rm arg}( {\hat x})}.
\end{align}

Define an integral transform
\begin{align}\label{Lp-de}
\check{f}(s)
=\int_{B_{R}}f(x) e^{- s \cdot x}{\rm d}x,\quad s\in\mathbb R^2, ~ 0<| s|< \kappa+K. 
\end{align}
Combining \eqref{Lp-4} and \eqref{Lp-de}, we obtain 
\begin{align*} %\label{Lp-1}
\check{f}( s)
&=\int_{0}^{R}\int_{0}^{2\pi}rf(x)\bigg(\sum_{n=-\infty}^{+\infty}{{\rm i}}^{n}e^{{\rm i} n{\rm arg}( s)}J_{n}({\rm i}| s|r)e^{-{\rm i} n{\rm arg}( {\hat x})}\bigg){\rm d}r{\rm d} \theta\nonumber\\
&=2\pi\sum_{n=-\infty}^{+\infty}{{\rm i}}^{n}e^{{\rm i} n{\rm arg}( s)}\int_{0}^{R}rJ_{n}({\rm i}| s|r)\bigg(\frac{1}{2\pi}\int_{0}^{2\pi}f(x)e^{-{\rm i} n\theta}{\rm d} \theta\bigg){\rm d}r\nonumber\\
&=2\pi\sum_{n=-\infty}^{+\infty}{{\rm i}}^{n}e^{{\rm i} n{\rm arg}( s)}\int_{0}^{R}f_n(r)J_{n}({\rm i}| s|r)r{\rm d}r,\quad 0<| s|< \kappa+K,
\end{align*}
which, together with the definition of $\beta_n$ as given in \eqref{abn}, results in
\begin{align}\label{Lp-1-1}
\check{f}( s)
=2\pi\sum_{n=-\infty}^{+\infty}{{\rm i}}^{n}\beta_n e^{{\rm i} n{\rm arg}( s)},\quad
| s|=\kappa.
\end{align}

Therefore, utilizing Theorem \ref{2DThe-cn}, along with \eqref{xik} and \eqref{Lp-1-1}, we arrive at a different yet equivalent description of nonradiating sources.

\begin{theorem}\label{2DThe-2}
Assume that $f\in L^2(B_R)$. Then the source $f$ is nonradiating if and only if $\hat{f}(\xi)=\check{f}( s)=0$ when $|\xi|=|s|=\kappa$. 
\end{theorem}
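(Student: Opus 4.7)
The plan is to prove Theorem \ref{2DThe-2} by directly chaining together Theorem \ref{2DThe-cn} with the two series representations \eqref{xik} and \eqref{Lp-1-1}, since the necessary analytical machinery has already been set up.

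For the forward direction, I would assume $f$ is nonradiating. By Theorem \ref{2DThe-cn}, this immediately yields $\alpha_n = \beta_n = 0$ for every $n \in \mathbb{Z}$. Plugging these coefficients into the expansions \eqref{xik} and \eqref{Lp-1-1} shows that $\hat{f}(\xi) = 0$ whenever $|\xi| = \kappa$ and $\check{f}(s) = 0$ whenever $|s| = \kappa$. This direction is essentially a substitution and requires no further analysis.

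The converse direction is where the one real step lies. Assume $\hat{f}(\xi) = 0$ on the circle $|\xi| = \kappa$ and $\check{f}(s) = 0$ on $|s| = \kappa$. Parametrizing $\hat\xi = (\cos\varphi, \sin\varphi)$ and using \eqref{xik}, the hypothesis becomes
\begin{equation*}
\sum_{n=-\infty}^{+\infty} (-{\rm i})^n \alpha_n\, e^{{\rm i} n \varphi} = 0 \quad \text{for all } \varphi \in [0, 2\pi).
\end{equation*}
This is a Fourier series in $\varphi$ that vanishes identically, and the uniqueness of Fourier coefficients (the orthogonality of $\{e^{{\rm i} n \varphi}\}_{n \in \mathbb{Z}}$ in $L^2(0, 2\pi)$) forces $(-{\rm i})^n \alpha_n = 0$, hence $\alpha_n = 0$ for every $n \in \mathbb{Z}$. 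An identical argument applied to \eqref{Lp-1-1} with $s = |s|(\cos\psi, \sin\psi)$ yields $\beta_n = 0$ for every $n \in \mathbb{Z}$. Invoking Theorem \ref{2DThe-cn} then concludes that $f$ is nonradiating.

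The main (and essentially only) obstacle is ensuring that the Fourier uniqueness step is rigorously justified under the hypothesis $f \in L^2(B_R)$. Since $f \in L^2(B_R)$ implies that $f_n(r) \in L^2((0,R), r\, {\rm d}r)$ and the coefficients $\alpha_n, \beta_n$ defined in \eqref{abn} are well-defined complex numbers with the series in \eqref{xik} and \eqref{Lp-1-1} converging appropriately, the Fourier uniqueness argument applies without difficulty. No further estimates are needed, and the proof reduces to a concise two-paragraph argument.
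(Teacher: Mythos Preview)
Your proposal is correct and follows exactly the same approach as the paper: the paper's argument is simply the one-line observation that Theorem \ref{2DThe-cn} together with the expansions \eqref{xik} and \eqref{Lp-1-1} yields the result. You have merely spelled out the Fourier-uniqueness step for the converse direction, which the paper leaves implicit.
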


\subsection{Near-field data}

We proceed to derive integral identities that establish connections between the integral transform of the source function and the near-field data $u$, $\partial_\nu u$, $\Delta u$, and $\partial_\nu \Delta u$ measured on the boundary $\partial B_R$.

By employing Green's theorem to the functions $u$ and $\Delta G$ in the domain $D$, with the help of
\eqref{BiFSE}, we get
\begin{align}\label{BiGTh}
&\int_{\partial B_R}\big[\Delta G(x,y) \partial_\nu u(y) -u(y) \partial_{\nu(y)} \Delta G(x,y)\big]{\rm d}{s_{{y}}}\notag\\
&=\int_{B_R}\big[ \Delta G(x,y)\Delta u(y)
-\kappa^4u(y)G(x,y)\big]{\rm d}{{y}}
+
\left\{\begin{array}{lll}
	u(x), &&x\in B_R,\\
	0,&& x\in\mathbb R^2\setminus\overline{B_R}. 
\end{array}\right.
\end{align}
where $\nu$ denotes the unit normal vector to the boundary $\partial B_R$, oriented outward from the interior of $B_R$.
Similarly, in the case of $\Delta u$ and $G$, we can deduce from Green's theorem, coupled with \eqref{bhwEq} and \eqref{BiFSE}, that
\begin{align}\label{BiGTh-1}
&\int_{\partial B_R}\big[\Delta u(y)\partial_{\nu(y)}  G(x,y) - G(x,y)\partial_\nu \Delta u(y)
\big]{\rm d}{s_{{y}}}\nonumber\\
&=\int_{B_R}\big[ \Delta G(x,y)\Delta u(y)-\kappa^4u(y)G(x,y)\big]{\rm d}{{y}}
+\int_{B_R}G(x,y)f(y){\rm d}{y}.
\end{align}
Subtracting \eqref{BiGTh-1} from \eqref{BiGTh} leads to 
\begin{align}\label{BiGTh-2}
&\int_{\partial B_R}\big[\Delta G(x,y)\partial_\nu u(y) -u(y)\partial_{\nu(y)} \Delta G(x,y)
\big]{\rm d}{s_{{y}}}\nonumber\\
&-\int_{\partial B_R}\big[\Delta u(y) \partial_{\nu(y)}  G(x,y) - G(x,y)\partial_\nu \Delta u(y)
\big]{\rm d}{s_{{y}}}+\int_{B_R}G(x,y)f(y){\rm d}{y}\nonumber\\
&=
\left\{\begin{array}{lll}
	u(x), &&x\in B_R,\\
	0,&& x\in\mathbb R^2\setminus\overline{B_R}.
\end{array}\right.
\end{align}

Noting that $\Phi_{\rm M}$ is a real-valued function, we let
\begin{align}\label{BifsS}
G^*(x,y)=-\frac{1}{2\kappa^2}\left(\Phi_{\rm H}^{*}(x,y)-\Phi_{\rm M}(x,y)\right),
\end{align}
where $\Phi_{\rm H}^{*}(x,y) =-\frac{{\rm i}}{4}H_{0}^{(2)}(\kappa|x-y|)$, with $H_0^{(2)}$ being the Hankel function of the second kind with order zero, satisfies
\begin{align}\label{sFS_P1}
&\Delta \Phi_{\rm H}^{*}(x,y)+\kappa^2 \Phi_{\rm H}^{*}(x,y)
=-\delta(x-y).
\end{align}
It can be verified from \eqref{BifsS}--\eqref{sFS_P1} that
\begin{align*}%\label{BifsSeq}
(\Delta^2-\kappa^4)G^*(x,y)=-\delta(x-y). 
\end{align*}

By substituting $G$ with $G^*$ in \eqref{BiGTh-2} and then subtracting the resulting equation from \eqref{BiGTh-2}, we can deduce 
\begin{align}\label{2Gif}
\int_{B_R} \Psi(x,y) f(y){\rm d}{y}&=\int_{\partial B_R}\big[
\partial_{\nu(y)} \Psi(x, y)\Delta u(y)
- \Psi(x, y)\partial_\nu \Delta u(y)
\big]{\rm d}{s_{{y}}}
\nonumber\\
&\quad-\int_{\partial B_R}\big[
\Delta \Psi(x, y)\partial_\nu u(y) 
-\partial_{\nu(y)} \Delta \Psi(x, y)
u(y)\big]{\rm d}{s_{{y}}},
\end{align}
where 
\begin{align}\label{G-G*}
\Psi(x,y)=G(x,y)-G^*(x,y)=-\frac{{\rm i}}{4\kappa^2}J_{0}(\kappa|x-y|). 
\end{align}
Substituting \eqref{G-G*} into \eqref{2Gif}, we have from a straightforward calculation that 
\begin{align}\label{2Gif-1}
\int_{B_R}J_{0}(\kappa|x-y|)f(y){\rm d}{y}&=\int_{\partial B_R}\big[\partial_{\nu(y)}  J_{0}(\kappa|x-y|)\Delta u(y)
-J_{0}(\kappa|x-y|)\partial_\nu \Delta u(y)\big]{\rm d}{s_{y}}\nonumber\\
&\quad-\int_{\partial B_R}\big[ \Delta J_{0}(\kappa|x-y|)\partial_\nu u(y)-\partial_{\nu(y)} \Delta J_{0}(\kappa|x-y|)
u(y)\big]{\rm d}{s_{y}}\nonumber\\&
=-\int_{\partial B_R}\Big[\left(\nu\cdot \nabla_x J_{0}(\kappa|x-y|)\right) \Delta u(y)
+J_{0}(\kappa|x-y|)\partial_\nu \Delta u(y)\Big]{\rm d}{s_{y}}
\nonumber\\
&\quad-\int_{\partial B_R}\Big[\Delta J_{0}(\kappa|x-y|)\partial_\nu u(y)+\left(\nu\cdot\nabla_x \Delta J_{0}(\kappa|x-y|)\right)u(y)\Big]{\rm d}{s_{y}}.
\end{align}
Since $f$ is compactly supported in $B_R$, we let $J(x) = J_0(\kappa|x|)$ and obtain from taking the Fourier transform on both sides of \eqref{2Gif-1} that
\begin{equation}\label{2Gif-3}
 \hat{J}(\xi )\hat{f}(\xi )=\hat{J}(\xi ) \hat{U}(\xi ), 
\end{equation}
where the expression for $\hat U$ is given in terms of the near-field data of the wave field $u$ as follows:
\[
\hat{U}(\xi )=\int_{\partial B_R}\Big[\left(-({\rm i} \xi\cdot \nu)\Delta u(y)-\partial_\nu \Delta u(y)
+|\xi|^2\partial_\nu u(y)+| \xi|^2({\rm i} \xi\cdot \nu)u(y)\right)e^{-{\rm i} \xi \cdot y}\Big]{\rm d}{s_{y}}. 
\]

As evident from \eqref{2Gif-3}, it becomes apparent that $\hat f(\xi)=\hat U(\xi)$ when $\hat J(\xi)\neq 0$. In such cases, the source function $f$ can be reconstructed by performing the inverse Fourier transform on $\hat U(\xi)$. However, if $\hat J(\xi) = 0$, the information about $\hat f(\xi)$ is not present. As a result, attempting to determine $f$ by applying the inverse Fourier transform to $\hat U(\xi)$ becomes unfeasible.

Using \eqref{PWJAE-1}, we deduce through a straightforward calculation that 
\begin{align}\label{FTJ}
\hat{J}(\xi)
&=\int_{\mathbb{R}^2}J_0(\kappa|x|) e^{-{\rm i} \xi \cdot x}{\rm d}x
=\int_{\mathbb{R}^2}J_0(\kappa|x|) \left(\sum_{n=-\infty}^{+\infty}(-{\rm i})^{n}e^{{\rm i} n{\rm arg}(\hat{ \xi})}J_{n}(|{ \xi}||x|)e^{-{\rm i} n{\rm arg}(x)}\right){\rm d}x\nonumber\\
&=\int_0^\infty \int_0^{2\pi} J_0(\kappa r) \left(\sum_{n=-\infty}^{+\infty}(-{\rm i})^{n}e^{{\rm i} n{\rm arg}(\hat{ \xi})}J_{n}(|{ \xi}|r)e^{-{\rm i} n{\rm arg}(x)}\right)r{\rm d\theta}{\rm dr}\nonumber\\
&=2\pi\int_{0}^{\infty}J_0(\kappa r)J_{0}(|{ \xi}|r) r{\rm d}r\notag\\
&=\frac{2\pi}{\kappa}\delta(|{ \xi}|-\kappa)=\left\{\begin{array}{lll}
\infty,&& | \xi|=\kappa,\\
0, &&| \xi|\neq\kappa, 
\end{array}\right.
\end{align}
where $\delta$ is the Dirac delta function. Based on \eqref{FTJ}, it is evident that $\hat{J}(\xi)=0$ for $\xi\in\mathbb R^2$ except on the circle $| \xi|=\kappa$. By \eqref{2Gif-3}, we deduce that the information regarding $\hat{f}(\xi )$ can be determined only on the circle $|\xi|=\kappa$, in the following manner:
\begin{align}\label{fF}
\hat{f}(\xi )=\hat{U}(\xi ),\quad | \xi|=\kappa.
\end{align}

Furthermore, by applying Green's second theorem and using \eqref{bhwEq} and \eqref{Lp-de}, we derive
\begin{align*}%\label{Lp-near}
\check{f}( s)&
=\int_{B_R}f(y) e^{- s \cdot y}{\rm d}y
=-\int_{B_R}[(\Delta-\kappa^2)(\Delta+\kappa^2)u(y)] e^{- s \cdot y}{\rm d}y
\nonumber\\
&=-\int_{B_R}[(\Delta+\kappa^2)u(y)] [(\Delta-\kappa^2) e^{- s \cdot y}]{\rm d}y
\nonumber\\
&\quad-\int_{\partial B_R} \Big[e^{- s \cdot y}\partial_\nu\left(\Delta+\kappa^2)u(y)\right)-
\left((\Delta+\kappa^2)u(y)\right)
\partial_\nu e^{- s \cdot y}
\Big]{\rm d}{s_{y}}\nonumber\\
&=-\int_{B_R}[(\Delta+\kappa^2)u(y)] [(| s|^2-\kappa^2) e^{- s \cdot y}]{\rm d}y
\nonumber\\
&\quad-\int_{\partial B_R} \Big[\partial_\nu \Delta u(y)+\kappa^2 \partial_\nu u(y)+({ s \cdot \nu})\Delta u(y)+
\kappa^2({ s \cdot \nu})u(y)\Big]e^{- s \cdot y}{\rm d}{s_{y}}, 
\end{align*}
which implies, when considering the case $|s|=\kappa$, that 
\begin{equation}\label{Lp-near}
 \check{f}(s)=\check{V}(s),
\end{equation}
where $\hat V(s)$ is solely based on the near-field data of the function $u$ on the boundary $\partial B_R$:
\begin{align*}
\check{V}(s)=-\int_{\partial B_R} \Big[\partial_\nu \Delta u(y)+\kappa^2 \partial_\nu u(y)+({ s \cdot \nu})\Delta u(y)+
\kappa^2({ s \cdot \nu})u(y)\Big]e^{- s \cdot y}{\rm d}{s_{y}}.
\end{align*}

Hence, by employing Theorem \ref{2DThe-2}, in conjunction with \eqref{fF} and \eqref{Lp-near}, we are able to establish the following alternative characterization of nonradiating sources using the near-field data. 

\begin{theorem}\label{2DThe-4}
Assume that $f\in L^2(B_R)$. Then the source $f$ is nonradiating if and only if $\hat{U}(\xi )=\check{V}(s)=0$ for $|\xi|=|s|=\kappa$.
\end{theorem}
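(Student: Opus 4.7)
The plan is to observe that Theorem \ref{2DThe-4} follows directly by combining the integral identities \eqref{fF} and \eqref{Lp-near} with the spectral characterization of nonradiating sources already furnished by Theorem \ref{2DThe-2}. Since all the real analytic work has already been done in the passage from \eqref{BiGTh}--\eqref{2Gif-3} and from the Green's-theorem computation producing \eqref{Lp-near}, what remains is essentially a bookkeeping argument; I do not expect a genuine obstacle.

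More precisely, I would first invoke Theorem \ref{2DThe-2}, which asserts that $f$ is nonradiating if and only if $\hat f(\xi)=\check f(s)=0$ on $|\xi|=|s|=\kappa$. Then I would apply identity \eqref{fF}, which says $\hat f(\xi)=\hat U(\xi)$ for $|\xi|=\kappa$, and identity \eqref{Lp-near}, which says $\check f(s)=\check V(s)$ for $|s|=\kappa$. Chaining these equivalences immediately yields that $f$ is nonradiating if and only if $\hat U(\xi)=\check V(s)=0$ on $|\xi|=|s|=\kappa$, which is the desired statement.

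One point worth emphasizing in the write-up is that identities \eqref{fF} and \eqref{Lp-near} are only assertions of equality \emph{on} the spheres $|\xi|=\kappa$ and $|s|=\kappa$ respectively; outside those spheres the relation \eqref{2Gif-3} degenerates because $\hat J(\xi)$ vanishes off $|\xi|=\kappa$, so one cannot recover $\hat f$ elsewhere from the near-field data. This is precisely why the equivalent spectral condition in Theorem \ref{2DThe-2} is already localized to the same two spheres, and why the substitution $\hat f\mapsto \hat U$, $\check f\mapsto \check V$ is permissible without loss of information relevant to the nonradiating property.

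Finally, to ensure the argument is self-contained, I would remark that the regularity assumption $f\in L^2(B_R)$ guarantees the traces $u|_{\partial B_R}$, $\partial_\nu u|_{\partial B_R}$, $\Delta u|_{\partial B_R}$, $\partial_\nu\Delta u|_{\partial B_R}$ appearing in $\hat U$ and $\check V$ are well defined in the appropriate Sobolev spaces, as already noted in the discussion preceding Proposition \ref{2DThe-fHct}, so that the boundary integrals defining $\hat U(\xi)$ and $\check V(s)$ make sense. With these ingredients in place, the proof of Theorem \ref{2DThe-4} reduces to a one-line citation of Theorem \ref{2DThe-2} together with \eqref{fF} and \eqref{Lp-near}.
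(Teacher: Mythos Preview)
Your proposal is correct and matches the paper's own argument exactly: the paper also derives Theorem \ref{2DThe-4} by combining Theorem \ref{2DThe-2} with the identities \eqref{fF} and \eqref{Lp-near}, and in fact states no proof beyond that one-line citation. Your additional remarks on regularity and on why the identities are restricted to $|\xi|=|s|=\kappa$ are accurate and add helpful context, but are not strictly needed.
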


\subsection{Nonuniqueness}

In this section, we examine the topic of nonuniqueness of the inverse source problem. As the inverse source problem exhibits linearity, the presence of nonuniqueness indicates that a nonzero source function has the capability to produce a localized wave field. In other words, the nonuniqueness issue originates from the existence of nonradiating sources. We begin by introducing the null space, a concept directly related to the nonuniqueness aspects of the inverse source problem.

\begin{definition}
The source function $f\in L^2(B_R)$ is said to be in the null space $\mathcal{N}(R)$ if it satisfies
\begin{align}\label{Epeq}
\int_{B_R}J_{0}(\kappa|x-y|)f(y){\rm d}{y}
=0,\quad \int_{B_{R}}\Phi_{\rm M}(x,y)f(y){\rm d}y=0, \quad|x| > R. 
\end{align}
\end{definition}

For $x, y\in\mathbb{R}^2$, it is noteworthy that $J_{0}$ has a cylindrical harmonic expansion, expressed as
\begin{align}\label{JJFS}
J_{0}(\kappa|x-y|)
=\sum_{n=-\infty}^{\infty}J_{n}(\kappa|x|)e^{{\rm i} n {\rm arg}(x)}J_{n}(\kappa|y|)e^{-{\rm i} n {\rm arg}(y)},\quad |x|>|y|.
\end{align}
Hence, for $|x|>R$, we can deduce from \eqref{Epeq} and \eqref{JJFS} that
\begin{align*}%\label{JJFS-1}
\int_{B_R}J_{0}(\kappa|x-y|)f(y){\rm d}{y}
&=\sum_{n=-\infty}^{\infty}J_{n}(\kappa|x|)e^{{\rm i} n {\rm arg}(x)}\int_{B_R}J_{n}(\kappa|y|)e^{-{\rm i} n {\rm arg}(y)}f(y){\rm d}{y}\nonumber\\
&=2\pi\bigg[\sum_{n=-\infty}^{\infty}J_{n}(\kappa|x|)e^{{\rm i} n {\rm arg}(x)}\int_{0}^{R}J_{n}(\kappa r)r\bigg(\frac{1}{2\pi}\int_{0}^{2\pi}f(y) e^{-{\rm i} n\theta}{\rm d} \theta\bigg){\rm d}r\bigg]\nonumber\\
&=2\pi\bigg[\sum_{n=-\infty}^{\infty}J_{n}(\kappa|x|)e^{{\rm i} n {\rm arg}(x)}\int_{0}^{R}f_n(r)J_{n}(\kappa r)r{\rm d}r\bigg], 
\end{align*}
which implies that
\begin{align}\label{JJFS-2}
\int_{B_R}J_{0}(\kappa|x-y|)f(y){\rm d}{y}
=2\pi\sum_{n=-\infty}^{\infty}\alpha_n J_{n}(\kappa|x|)e^{{\rm i} n {\rm arg}(x)},\quad |x| > R.
\end{align}

\begin{proposition}\label{2DThe-FHnull}
Assume that $f\in L^2(B_R)$. Then the source $f$ is nonradiating if and only if $f \in \mathcal{N}(R)$.
\end{proposition}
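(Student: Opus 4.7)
The plan is to deduce Proposition \ref{2DThe-FHnull} by combining the Fourier/Bessel expansion identity \eqref{JJFS-2} just derived with the expansion \eqref{fHs} for $f_{\rm M}$, and then invoking the previously established characterization in Theorem \ref{2DThe-cn} that a source is nonradiating if and only if $\alpha_n=\beta_n=0$ for all $n\in\mathbb Z$.

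For the forward direction, assume $f$ is nonradiating. By Theorem \ref{2DThe-cn}, all the coefficients $\alpha_n$ and $\beta_n$ defined in \eqref{abn} vanish. Then \eqref{JJFS-2} immediately yields $\int_{B_R}J_0(\kappa|x-y|)f(y)\,{\rm d}y=0$ for $|x|>R$, which is the first condition in \eqref{Epeq}. For the second condition, I would simply note that $\int_{B_R}\Phi_{\rm M}(x,y)f(y)\,{\rm d}y = -f_{\rm M}(x)$ by \eqref{Binse-0}, and by \eqref{fHs} this equals $\frac{{\rm i}\pi}{2}\sum_n \beta_n H_n^{(1)}({\rm i}\kappa|x|)e^{{\rm i}n\arg(x)}$ for $|x|>R$, which is zero. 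Hence $f\in\mathcal N(R)$.

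For the converse, assume $f\in\mathcal N(R)$. The identity \eqref{JJFS-2} together with the first condition in \eqref{Epeq} gives
\begin{equation*}
\sum_{n=-\infty}^{\infty}\alpha_n J_n(\kappa|x|)e^{{\rm i}n\arg(x)}=0,\qquad |x|>R.
\end{equation*}
Fixing $r>R$ and applying the orthogonality of $\{e^{{\rm i}n\theta}\}$ on the circle $|x|=r$, I would conclude $\alpha_n J_n(\kappa r)=0$ for every $n\in\mathbb Z$ and every $r>R$. Since $J_n(\kappa\cdot)$ is a nonzero real-analytic function on $(R,\infty)$, this forces $\alpha_n=0$ for all $n$. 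Similarly, the second condition in \eqref{Epeq} says $f_{\rm M}(x)=0$ for $|x|>R$, so by Proposition \ref{2DThe-fHct} (or directly by isolating the Fourier modes in \eqref{fHs}, using that $H_n^{(1)}({\rm i}\kappa r)\neq 0$) we obtain $\beta_n=0$ for all $n$. Theorem \ref{2DThe-cn} then shows that $f$ is nonradiating.

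The only nontrivial step is the mode-by-mode separation that extracts $\alpha_n=0$ from the vanishing of the series, but this is routine once one multiplies by $e^{-{\rm i}m\arg(x)}$ and integrates over a circle of radius $r>R$ on which $J_m(\kappa r)\not\equiv 0$. Everything else is a direct bookkeeping combination of \eqref{Binse-0}, \eqref{fHs}, \eqref{JJFS-2}, and Theorem \ref{2DThe-cn}.
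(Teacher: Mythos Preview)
Your proof is correct and follows essentially the same route as the paper: both directions are reduced to the vanishing of $\alpha_n$ and $\beta_n$ via the expansion \eqref{JJFS-2} for the $J_0$-integral and \eqref{fHs} (equivalently Proposition \ref{2DThe-fHct}) for the $\Phi_{\rm M}$-integral, then appealing to Theorem \ref{2DThe-cn}. Your explicit mode-by-mode separation to extract $\alpha_n=0$ is a welcome elaboration of what the paper leaves implicit when it simply ``invokes \eqref{JJFS-2}''.
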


\begin{proof}
If $f\in L^2(B_R)$ is a nonradiating source for the biharmonic wave equation, according to Theorem \ref{2DThe-cn}, Proposition \ref{2DThe-fHct}, and \eqref{JJFS-2}, we can derive \eqref{Epeq}, which implies that $f$ belongs to $\mathcal{N}(R)$.

Conversely, if $f\in L^2(B_R)$ belongs to $\mathcal{N}(R)$, it satisfies \eqref{Epeq}. By invoking \eqref{JJFS-2} and utilizing Proposition \ref{2DThe-fHct}, we can conclude that $\alpha_n=\beta_n=0$ holds for all $\mathbb{Z}$, thereby confirming that $f$ is indeed a nonradiating source.
\end{proof}

Proposition \ref{2DThe-FHnull} demonstrates that the null space includes a set of nonradiating sources. In Section \ref{S:ex}, we establish the non-emptiness of $\mathcal{N}(R)$ by providing explicit examples of nonradiating sources. Consequently, the presence of nonradiating sources leads to the lack of uniqueness in the inverse source problem. 

\section{The Three-Dimensional problem}\label{S:3d}

This section is dedicated to examining the characterization of nonradiating sources within the context of the three-dimensional biharmonic wave equation. The analysis presented here is analogous to that of the two-dimensional case.

\subsection{Characterizations of nonradiating sources}

In three dimensions, the Green's functions for the Helmholtz and modified Helmholtz equations are given by 
\begin{align*}%\label{3DhG}
\Phi_{\rm H}(x,y)=\frac{e^{{\rm i}\kappa|x-y|}}{4\pi|x-y|},\quad\quad
\Phi_{\rm M}(x,y)=\frac{e^{-\kappa|x-y|}}{4\pi|x-y|}. 
\end{align*}
By the addition theorem (cf. \cite[Theorem 2.11]{DR-2013}), they admit the following expansions for $|x|>|y|$ :
\begin{align}
\Phi_{\rm H}(x,y)
&={\rm i}\kappa\sum_{n=0}^{\infty}\sum_{m=-n}^{n}h_{n}^{(1)}(\kappa|x|)Y_{n}^{m}( {\hat x})j_{n}(\kappa|y|)\overline{Y_{n}^{m}( {\hat y})},\label{3DGraf-1}\\
\Phi_{\rm M}(x,y)
&=-\kappa\sum_{n=0}^{\infty}\sum_{m=-n}^{n}h_{n}^{(1)}({\rm i}\kappa|x|)Y_{n}^{m}( {\hat x})j_{n}({\rm i}\kappa|y|)\overline{Y_{n}^{m}( {\hat y})},\label{3DGraf-2}
\end{align}
where ${\hat x}=x/|x|, {\hat y}=y/|y|$,  the spherical harmonics $\{Y_{n}^{m}: m=-n, \dots, n, n=0, 1, \dots\}$ form a complete orthonormal system in the space of square integrable functions on the unit sphere, $j_n$ denotes spherical Bessel function of order $n$, and $h_{n}^{(1)}$ stands for the spherical Hankel function of the first kind with order $n$.

Let
\begin{align}\label{3Dfhm}
g_{\rm H}(x)=-\int_{B_{R}}\Phi_{\rm H}(x,y)f(y){\rm d}y,\quad\quad
g_{\rm M}(x)=-\int_{B_{R}}\Phi_{\rm M}(x,y)f(y){\rm d}y. 
\end{align}
We have from \eqref{BiFS}--\eqref{Bihs} and \eqref{3Dfhm} that
\begin{align}\label{3DBinse}
u(x)&=\int_{B_{R}}G(x,y)f(y){\rm d}y
=-\frac{1}{2\kappa^2}\left(\int_{B_{R}}\Phi_{\rm H}(x,y)f(y){\rm d}y-\int_{B_{R}}\Phi_{\rm M}(x,y)f(y){\rm d}y\right)\nonumber\\
&=\frac{1}{2\kappa^2}\left(g_{\rm H}(x)-g_{\rm M}(x)\right).
\end{align}

\begin{theorem}\label{3DThe-nrs}
The source $f$ is nonradiating if and only if $g_{\rm H}(x)= g_{\rm M}(x)=0$ for $|x|>R$.
\end{theorem}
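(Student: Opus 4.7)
The plan is to mirror the proof of Theorem \ref{2DThe-nrs} almost verbatim, using the representation formula \eqref{3DBinse} together with asymptotic estimates for the three-dimensional Green's functions $\Phi_{\rm H}$ and $\Phi_{\rm M}$. The sufficiency direction is essentially a tautology: if $g_{\rm H}(x)=g_{\rm M}(x)=0$ for $|x|>R$, then \eqref{3DBinse} immediately yields $u(x)=\frac{1}{2\kappa^2}(g_{\rm H}(x)-g_{\rm M}(x))=0$ for $|x|>R$, so $f$ is nonradiating.

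For the necessity direction, assume $f$ is nonradiating, so $u(x)=0$ for $|x|>R$. Then \eqref{3DBinse} gives $g_{\rm H}(x)=g_{\rm M}(x)$ for $|x|>R$, and I must exclude the possibility that the two functions agree outside $B_R$ while both are nonzero there. The key asymmetry between the two Green's functions is that $\Phi_{\rm M}(x,y)=\frac{e^{-\kappa|x-y|}}{4\pi|x-y|}$ decays exponentially as $|x|\to\infty$, whereas $\Phi_{\rm H}(x,y)=\frac{e^{{\rm i}\kappa|x-y|}}{4\pi|x-y|}$ decays only algebraically. Applying the expansion $|x-y|=|x|-\hat{x}\cdot y+O(1/|x|)$ and proceeding as in \eqref{2Dasb}--\eqref{2Dasb-1}, I would derive
\begin{align*}
\Phi_{\rm H}(x,y)&=\frac{e^{{\rm i}\kappa|x|}}{4\pi|x|}\bigl[e^{-{\rm i}\kappa\hat{x}\cdot y}+O(1/|x|)\bigr],\\
\Phi_{\rm M}(x,y)&=\frac{e^{-\kappa|x|}}{4\pi|x|}\bigl[e^{\kappa\hat{x}\cdot y}+O(1/|x|)\bigr],
\end{align*}
for $|x|\to\infty$ with $|x|>|y|$. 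Consequently, the far-field contribution of $g_{\rm M}$ is exponentially small, while $g_{\rm H}$ has leading-order behavior governed by the Fourier transform of $f$ restricted to the sphere $|\xi|=\kappa$.

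Substituting these into the analog of \eqref{2DFSab} and then into \eqref{3DBinse} produces
\[
u(x)=-\frac{1}{8\pi\kappa^2}\frac{e^{{\rm i}\kappa|x|}}{|x|}\Bigl[\int_{B_R}e^{-{\rm i}\kappa\hat{x}\cdot y}f(y)\,{\rm d}y+O(1/|x|)\Bigr],\quad|x|\to\infty,
\]
since $g_{\rm M}$ contributes only exponentially decaying terms. The condition $u\equiv0$ for $|x|>R$ then forces the leading coefficient $\int_{B_R}e^{-{\rm i}\kappa\hat{x}\cdot y}f(y)\,{\rm d}y$ to vanish for every unit vector $\hat{x}\in S^2$, i.e., the far-field pattern of $g_{\rm H}$ vanishes identically. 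Since $g_{\rm H}$ satisfies the homogeneous Helmholtz equation in $\mathbb{R}^3\setminus\overline{B_R}$ together with the Sommerfeld radiation condition, Rellich's lemma yields $g_{\rm H}(x)=0$ for $|x|>R$, and then the identity $g_{\rm H}=g_{\rm M}$ outside $B_R$ gives $g_{\rm M}(x)=0$ there as well, completing the argument.

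The only mildly subtle point—and the obstacle worth flagging—is justifying that the leading asymptotic of $u$ truly forces the integral to vanish. In the 2D argument this step is phrased as ``the first scenario is eliminated'' by inspection of the leading term; here I would make the same observation explicit via Rellich's lemma applied to $g_{\rm H}$, which is cleaner in 3D and avoids any circular reasoning about when an oscillatory leading term can be ``absorbed'' into a remainder.
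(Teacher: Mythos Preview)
Your proposal is correct and follows essentially the same route as the paper: both arguments pass from \eqref{3DBinse} to the far-field asymptotics of $u$ via the expansions of $\Phi_{\rm H}$ and $\Phi_{\rm M}$, then conclude that $g_{\rm H}$ (and hence $g_{\rm M}$) must vanish outside $B_R$. Your explicit appeal to Rellich's lemma makes rigorous the step the paper phrases as ``which implies that $u(x)\not\equiv 0$,'' but this is a clarification rather than a different approach.
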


\begin{proof}
It is obvious from \eqref{3DBinse} that $f$ is nonradiating if $g_{\rm H}(x)= g_{\rm M}(x)=0$ for $|x|>R$. Conversely, if $f$ is nonradiating, then from \eqref{3DBinse}, it is evident that $u(x)=0$ for $|x| > R$, and this dependence solely relies on $g_{\rm H}(x)$ and $g_{\rm M}(x)$. In the scenario where $g_{\rm H}(x)\neq 0$ and $g_{\rm M}(x)\neq 0$ for $|x| > R$, we can confirm, based on \eqref{2Dax-y}, that as $|x|\to \infty$, the following asymptotic expansions hold:
\begin{align}\label{3DGH}
\Phi_{\rm H}(x,y)
=\frac{e^{{\rm i}\kappa|x-y|}}{4\pi|x-y|}
=\frac{e^{{\rm i}\kappa|x|}}{4\pi|x|}\bigg[e^{-{\rm i}\kappa {\hat x} \cdot y}+O\bigg(\frac{1}{| {x}| }\bigg)\bigg]
\end{align}
and
\begin{align}\label{3DGM}
\Phi_{\rm M}(x,y)
=\frac{e^{-\kappa|x-y|}}{4\pi|x-y|}
=\frac{e^{{\rm i}\kappa|x|}}{4\pi|x|}\bigg[e^{-\kappa({\rm i}|x|+|x|- {\hat x} \cdot y)}\bigg(1+O\bigg(\frac{1}{| {x}| }\bigg)\bigg)\bigg]. 
\end{align}
Hence, from \eqref{3DGH} and \eqref{3DGM}, we have
\begin{align}\label{3DGab}
G(x,y)
=-\frac{1}{8\pi\kappa^2}\frac{e^{{\rm i}\kappa|x|}}{|x|}\bigg[e^{-{\rm i}\kappa {\hat x} \cdot y}+O\bigg(\frac{1}{| {x}| }\bigg)\bigg],\quad  |x|\to \infty. 
\end{align}
Substituting \eqref{3DGab} into \eqref{3DBinse} yields 
\begin{align*}%\label{3Duab}
u(x)=-\frac{1}{8\pi\kappa^2}\frac{e^{{\rm i}\kappa|x|}}{|x|}\bigg[\int_{D}e^{-{\rm i}\kappa {\hat x} \cdot y}f(y){\rm d}y+O\bigg(\frac{1}{| {x}| }\bigg)\bigg],\quad  |x|\to \infty. 
\end{align*}
which implies that $u(x)\not\equiv 0$ for $|x| > R$. Therefore, ensuring $u(x)= 0$ for $|x| > R$ is equivalent to the condition $g_{\rm H}(x)= g_{\rm M}(x)= 0$ for $|x| > R$.
\end{proof}

According to Theorem \ref{3DThe-nrs}, a source $f(x)$ is deemed nonradiating for the biharmonic wave equation if and only if it satisfies the condition of being a nonradiating source for both the Helmholtz equation and the modified Helmholtz equation.

Combining \eqref{3DGraf-1}--\eqref{3DGraf-2} and \eqref{3DBinse}, we obtain 
\begin{align}\label{3DBihss}
u(x)&=\int_{B_{R}}G(x,y)f(y){\rm d}y
=\frac{1}{2\kappa^2}\left(g_{\rm H}(x)-g_{\rm M}(x)\right)\nonumber\\
&=-\frac{1}{2\kappa^2}\bigg[{\rm i}\kappa\sum_{n=0}^{\infty}\sum_{m=-n}^{n}h_{n}^{(1)}(\kappa|x|)Y_{n}^{m}( {\hat x})\int_{B_{R}}j_{n}(\kappa|y|)\overline{Y_{n}^{m}( {\hat y})}f(y){\rm d}y\nonumber\\
&\quad+\kappa\sum_{n=0}^{\infty}\sum_{m=-n}^{n}h_{n}^{(1)}({\rm i}\kappa|x|)Y_{n}^{m}( {\hat x})\int_{B_{R}}j_{n}({\rm i}\kappa|y|)\overline{Y_{n}^{m}( {\hat y})}f(y){\rm d}y\bigg]\nonumber\\
&=-\frac{1}{2\kappa^2}\bigg[{\rm i}\kappa\sum_{n=0}^{\infty}\sum_{m=-n}^{n}h_{n}^{(1)}(\kappa|x|)Y_{n}^{m}( {\hat x})
\int_{0}^{R}j_{n}(\kappa r)r^2\bigg(\int_{0}^{\pi}\int_{0}^{2\pi} f(r,\theta,\phi) \overline{Y_{n}^{m}(\theta,\phi)}\sin\theta{\rm d}\theta{\rm d}\phi\bigg){\rm d}r\bigg]\nonumber\\
&\quad+
\kappa\sum_{n=0}^{\infty}\sum_{m=-n}^{n}h_{n}^{(1)}({\rm i}\kappa|x|)Y_{n}^{m}( {\hat x})
\int_{0}^{R}j_{n}({\rm i}\kappa r)r^2\bigg(\int_{0}^{\pi}\int_{0}^{2\pi} f(r,\theta,\phi) \overline{Y_{n}^{m}(\theta,\phi)}\sin\theta{\rm d}\theta{\rm d}\phi\bigg){\rm d}r\bigg]\nonumber\\
&=-\frac{1}{2\kappa^2}\bigg[{\rm i}\kappa\sum_{n=0}^{\infty}\sum_{m=-n}^{n}h_{n}^{(1)}(\kappa|x|)Y_{n}^{m}( {\hat x})\int_{0}^{R}f_n^{m}(r)j_{n}(\kappa r)r^2{\rm d}r\nonumber\\
&\quad+\kappa\sum_{n=0}^{\infty}\sum_{m=-n}^{n}h_{n}^{(1)}({\rm i}\kappa|x|)Y_{n}^{m}( {\hat x})\int_{0}^{R}f_n^{m}(r)j_{n}({\rm i}\kappa r)r^2{\rm d}r\bigg]\nonumber\\
&=-\frac{1}{2\kappa^2}\bigg[{\rm i}\kappa\sum_{n=0}^{\infty}\sum_{m=-n}^{n}\alpha_n^m h_{n}^{(1)}(\kappa|x|)Y_{n}^{m}( {\hat x}) +\kappa\sum_{n=0}^{\infty}\sum_{m=-n}^{n}\beta_n^m h_{n}^{(1)}({\rm i}\kappa|x|)Y_{n}^{m}( {\hat x})\bigg],
\end{align}
where
\begin{align*}%\label{3Dfn}
f_n^{m}(r)=\int_{0}^{\pi}\int_{0}^{2\pi} f(r,\theta,\phi) \overline{Y_{n}^{m}(\theta,\phi)}\sin\theta{\rm d}\theta{\rm d}\phi,
\end{align*}
and
\begin{align*} %\label{3Dfn-1}
\alpha_n^m=\int_{0}^{R}f_n^{m}(r)j_{n}(\kappa r)r^2{\rm d}r,\quad \quad
\beta_n^m=\int_{0}^{R}f_n^{m}(r)j_{n}({\rm i}\kappa r)r^2{\rm d}r.
\end{align*}
It is evident that $f_n^m$ also represents the Fourier coefficients of $f$ in the expansion
\begin{align*}%\label{3DFse}
f(x)=\sum_{n=0}^{\infty}\sum_{m=-n}^{n}f_n^m(r)Y_n^m(\theta,\phi)
\end{align*}
with respect to the complete set of spherical harmonics $Y_n^m(\theta,\phi)$.

It can be seen from \eqref{3DBihss} that the wave field $u(x)$ for $|x| > R$ relies on the coefficients $\alpha_n^m$ and $\beta_n^m$, which represent the projections of the coefficients $f_n^m$ onto the functions $j_{n}(\kappa r)r^2$ and $j_{n}({\rm i}\kappa r)r^2$, respectively. As a result, it becomes possible to construct a source $f$ that does not yield any radiated field by nullifying these projections. By closely examining the expansions \eqref{3DBihss} of the solution and referring to Theorem \ref{3DThe-nrs}, we are able to derive the subsequent theorem.

\begin{theorem}\label{3DThe-ctc}
The source $f$ is nonradiating if and only if $\alpha_n^m=\beta_n^m=0$ for all $n = 0,1,\dots, m = -n, \dots, n.$
\end{theorem}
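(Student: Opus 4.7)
The plan is to read off the exterior expansions of $g_{\rm H}$ and $g_{\rm M}$ already embedded in the derivation of \eqref{3DBihss}, and then invoke orthonormality of spherical harmonics together with non-vanishing of the radial factors. Specifically, substituting the addition-theorem expansions \eqref{3DGraf-1}--\eqref{3DGraf-2} into the definitions \eqref{3Dfhm} of $g_{\rm H}$ and $g_{\rm M}$, separately, gives for $|x|>R$
\begin{align*}
g_{\rm H}(x) &= -{\rm i}\kappa \sum_{n=0}^{\infty}\sum_{m=-n}^{n} \alpha_n^m\, h_n^{(1)}(\kappa|x|)\, Y_n^m(\hat x), \\
g_{\rm M}(x) &= \kappa \sum_{n=0}^{\infty}\sum_{m=-n}^{n} \beta_n^m\, h_n^{(1)}({\rm i}\kappa|x|)\, Y_n^m(\hat x).
\end{align*}
By Theorem \ref{3DThe-nrs}, the source $f$ is nonradiating if and only if both of these sums vanish identically on $\mathbb{R}^3\setminus\overline{B_R}$, so the task reduces to showing that this joint vanishing is equivalent to $\alpha_n^m=\beta_n^m=0$ for all admissible $n,m$.

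The sufficiency direction is immediate, since annihilating every coefficient kills each expansion termwise. For the converse, I would fix an arbitrary radius $r>R$ and view the restrictions $g_{\rm H}(r\,\cdot)$ and $g_{\rm M}(r\,\cdot)$ as elements of $L^2$ of the unit sphere whose spherical harmonic coefficients are precisely $-{\rm i}\kappa\,\alpha_n^m\, h_n^{(1)}(\kappa r)$ and $\kappa\,\beta_n^m\, h_n^{(1)}({\rm i}\kappa r)$, respectively. Completeness and orthonormality of $\{Y_n^m\}$ then force $\alpha_n^m\, h_n^{(1)}(\kappa r)=0$ and $\beta_n^m\, h_n^{(1)}({\rm i}\kappa r)=0$ for every pair $(n,m)$.

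The one technical point I anticipate as the main obstacle is the non-vanishing of these radial factors. The spherical Hankel function $h_n^{(1)}$ of the first kind has no real positive zeros, since the zeros of $j_n$ and $y_n$ on $(0,\infty)$ strictly interlace and therefore never coincide, so $|h_n^{(1)}(\kappa r)|>0$. For the imaginary argument, $h_n^{(1)}({\rm i}\kappa r)$ equals a nonzero constant multiple of the modified spherical Bessel function $k_n(\kappa r)$, which is strictly positive on $(0,\infty)$; hence $h_n^{(1)}({\rm i}\kappa r)\neq 0$ for $r>R$ as well. Dividing through yields $\alpha_n^m=\beta_n^m=0$ for all $n,m$, completing the equivalence. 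Everything else reduces to orthogonality arguments and the series manipulations already performed in deriving \eqref{3DBihss}, so no fundamentally new analytic ingredients are needed.
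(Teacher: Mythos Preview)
Your proposal is correct and follows precisely the route the paper indicates: the paper offers no formal proof beyond the remark that the result follows ``by closely examining the expansions \eqref{3DBihss} of the solution and referring to Theorem \ref{3DThe-nrs},'' and your argument is exactly the detailed execution of that sentence. The one ingredient you make explicit that the paper leaves implicit is the non-vanishing of the radial factors $h_n^{(1)}(\kappa r)$ and $h_n^{(1)}({\rm i}\kappa r)$ for $r>R$, which is indeed the only point requiring comment.
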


\begin{proposition}\label{3DThe-gHct}
Assume that $f\in L^2(B_R)$. Then $g_{\rm M}(x)|_{\partial B_R}=0$ if and only if $\beta_n^m=0$ for all $n = 0,1,\cdots, m = -n,\cdots, n$.
\end{proposition}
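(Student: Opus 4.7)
The plan is to mirror the proof of Proposition \ref{2DThe-fHct}, adapting it from cylindrical harmonics to spherical harmonics. The first step is to expand $g_{\rm M}$ explicitly outside $B_R$. Substituting the addition-theorem expansion \eqref{3DGraf-2} into the definition \eqref{3Dfhm} of $g_{\rm M}$, and using the spherical-harmonic Fourier coefficients $f_n^m(r)$ together with the definition of $\beta_n^m$, I obtain, for $|x|>R$,
\begin{align*}
g_{\rm M}(x)=\kappa\sum_{n=0}^{\infty}\sum_{m=-n}^{n}\beta_n^m\,h_{n}^{(1)}({\rm i}\kappa|x|)\,Y_{n}^{m}({\hat x}).
\end{align*}
This is the 3D analogue of \eqref{fHs} and is the central identity on which both directions rest.

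For the forward implication, since $f$ has compact support in $B_R$, the function $g_{\rm M}$ satisfies the modified Helmholtz equation $\Delta g_{\rm M}-\kappa^2 g_{\rm M}=0$ in $\mathbb{R}^3\setminus\overline{B_R}$. The defining integral \eqref{3Dfhm} together with the exponential decay of $\Phi_{\rm M}$ in three dimensions shows that $g_{\rm M}$ decays at infinity and satisfies the Sommerfeld radiation condition. Assuming $g_{\rm M}|_{\partial B_R}=0$, uniqueness for the exterior Dirichlet problem of the modified Helmholtz equation (with the radiation/decay condition at infinity) gives $g_{\rm M}(x)=0$ for all $|x|>R$. Inserting this into the series above and invoking the orthonormality of the spherical harmonics $\{Y_n^m\}$ on the unit sphere, together with the fact that $h_n^{(1)}({\rm i}\kappa r)\neq 0$ for $r>R$, yields $\beta_n^m=0$ for every admissible $n,m$.

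For the reverse implication, if every $\beta_n^m$ vanishes then the displayed expansion collapses to $g_{\rm M}(x)=0$ for all $|x|>R$. Since $f\in L^2(B_R)$ implies $g_{\rm M}\in H^2_{\rm loc}(\mathbb{R}^3)$ by the elliptic regularity argument already invoked in the 2D case, the trace $g_{\rm M}|_{\partial B_R}$ belongs to $H^{3/2}(\partial B_R)$ and is in particular continuous in the relevant sense, so passing to the limit from the exterior identifies $g_{\rm M}|_{\partial B_R}=0$.

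The only delicate point I expect is justifying the termwise manipulations of the spherical expansion; this requires locally uniform convergence on compact subsets of $\{|x|>R\}$, which follows from the asymptotics of $j_n({\rm i}\kappa r)$ and $h_n^{(1)}({\rm i}\kappa r)$ in $n$ exactly as in the well-known Helmholtz case used for \eqref{fMs}--\eqref{fHs}. With that convergence in hand, the orthogonality step extracts the coefficients cleanly and both directions follow as above.
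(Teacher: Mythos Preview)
Your proposal is correct and follows essentially the same route as the paper: expand $g_{\rm M}$ for $|x|>R$ via the addition theorem, use uniqueness of the exterior Dirichlet problem for the modified Helmholtz equation in the forward direction, and use the $H^{3/2}$ trace regularity in the reverse direction. Your argument is in fact slightly more explicit than the paper's (you spell out the orthonormality step and the nonvanishing of $h_n^{(1)}({\rm i}\kappa r)$), and your sign in the displayed expansion is the correct one---the paper's \eqref{ghct-1} carries an extraneous minus sign, which is harmless for the proof.
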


\begin{proof}
From \eqref{3DGraf-2}--\eqref{3Dfhm} and \eqref{3DGM}, for $f\in L^2(B_R)$, we know that $g_{\rm M}$ satisfies the modified Helmholtz equation
\begin{equation}\label{3D-mhe}
 \Delta g_{\rm M}-\kappa^2 g_{\rm M}=f\quad \text{in} ~ \mathbb R^3
\end{equation}
and
\begin{align}\label{ghct-1}
g_{\rm M}(x)
=-\int_{B_{R}}\Phi_{\rm M}(x,y)f(y){\rm d}y
=-\kappa\sum_{n=0}^{\infty}\sum_{m=-n}^{n}\beta_n^m h_{n}^{(1)}({\rm i}\kappa|x|)Y_{n}^{m}( {\hat x}),\quad |x|>R.
\end{align}
It follows from \eqref{3D-mhe} and the boundary condition $g_{\rm M}=0$ on $\partial B_R$ and the radiation condition, we have $g_{\rm M}(x)=0$ for $|x|>R$, which implies that $\beta_n^m=0$ for all $n = 0,1,\dots, m = -n,\cdots, n$.

From $\beta_n^m=0$ for all $n = 0,1,\dots, m = -n,\cdots, n$ and \eqref{ghct-1}, we have $g_{\rm M}(x)=0$ for $|x|>R$. Then, combining $g_{\rm M}(x)=0$ for $|x|>R$ and $ g_{\rm M}\in H^{\frac{3}{2}}(\partial B_R)$, we find that $g_{\rm M}(x)=0$ on $\partial B_R$. 
\end{proof}

Similarly, we may also show the following result. 

\begin{proposition}\label{3DThe-gMct}
Assume that $f\in L^2(B_R)$.  Then $g_{\rm H}(x)|_{\partial B_R}=0$ if and only if $\alpha_n^m=0$ for all $n = 0,1,\dots, m = -n,\dots, n$.
\end{proposition}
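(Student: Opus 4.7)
The plan is to mirror the structure used in the proof of Proposition \ref{3DThe-gHct}, replacing the modified Helmholtz ingredients with their Helmholtz counterparts. The starting point is the expansion obtained by combining \eqref{3Dfhm} with the addition theorem \eqref{3DGraf-1}, which for $|x|>R$ yields
\[
g_{\rm H}(x)=-{\rm i}\kappa\sum_{n=0}^{\infty}\sum_{m=-n}^{n}\alpha_n^m\, h_{n}^{(1)}(\kappa|x|)\,Y_{n}^{m}(\hat x).
\]
Since $f$ is supported in $B_R$, the function $g_{\rm H}$ satisfies $(\Delta+\kappa^2)g_{\rm H}=0$ in $\mathbb R^3\setminus\overline{B_R}$, and the asymptotics \eqref{3DGH} guarantee that $g_{\rm H}$ meets the Sommerfeld radiation condition.

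For the forward direction, assuming $g_{\rm H}|_{\partial B_R}=0$, I would invoke uniqueness of the exterior Dirichlet problem for the Helmholtz equation under the radiation condition (i.e., Rellich's lemma applied to a radiating solution with vanishing Dirichlet data) to conclude $g_{\rm H}(x)=0$ for all $|x|>R$. Feeding this back into the spherical-harmonic expansion and using the orthonormality of $\{Y_n^m\}$ on $\mathbb S^2$ gives $\alpha_n^m\,h_n^{(1)}(\kappa r)=0$ for every $r>R$ and every admissible $(n,m)$; since the spherical Hankel functions $h_n^{(1)}$ have no positive real zeros (the Wronskian identity $W(j_n,y_n)(z)=1/z^2$ shows $j_n$ and $y_n$ cannot vanish simultaneously at any $z>0$, so $h_n^{(1)}=j_n+{\rm i}y_n$ is nonzero there), this forces $\alpha_n^m=0$.

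Conversely, if every $\alpha_n^m$ vanishes, the expansion above immediately yields $g_{\rm H}(x)=0$ for $|x|>R$. As in the proof of Proposition \ref{3DThe-gHct}, the $L^2$-regularity of $f$ places $g_{\rm H}$ in $H^2_{\rm loc}(\mathbb R^3)$, so the trace $g_{\rm H}|_{\partial B_R}\in H^{3/2}(\partial B_R)$ is well-defined and coincides with the (vanishing) exterior boundary values, whence $g_{\rm H}|_{\partial B_R}=0$.

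The only delicate point is the uniqueness step in the forward direction: in the modified-Helmholtz analog, exponential decay at infinity renders exterior Dirichlet uniqueness essentially immediate, whereas here one must appeal to the classical Rellich/radiation-condition uniqueness for $(\Delta+\kappa^2)u=0$ in an unbounded exterior domain. This, together with the nonvanishing of $h_n^{(1)}$ on the positive real axis, are the two analytic inputs that differ from the proof of Proposition \ref{3DThe-gHct}; both are standard and can simply be cited rather than reproved.
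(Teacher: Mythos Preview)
Your proposal is correct and follows exactly the approach the paper intends: the paper merely states ``Similarly, we may also show the following result,'' referring back to the proof of Proposition~\ref{3DThe-gHct}, and your argument carries out precisely that analogy with the appropriate Helmholtz substitutions. Your remarks about Rellich's lemma and the nonvanishing of $h_n^{(1)}$ on the positive real axis are the natural details one must supply when passing from the modified to the ordinary Helmholtz exterior problem, and they are indeed standard.
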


Combining Theorem \ref{3DThe-ctc}, Proposition \ref{3DThe-gHct} and \ref{3DThe-gMct}, we find that the source $f\in L^2(B_R)$ is nonradiating if and only if $g_{\rm H}(x)|_{\partial B_R}=g_{\rm M}(x)|_{\partial B_R}=0$.

Next we derive an alternative characterization of nonradiating sources in $\mathbb{R}^3$. Recall the Jacobi--Anger expansion for the plane wave (cf. \cite[Theorem 2.8 and $(2.46)$]{DR-2013}):
\begin{align}\label{3DPWJAE}
e^{{\rm i}\kappa{x}\cdot{ d}}&=4\pi\sum_{n=0}^{+\infty}
\sum_{m=-n}^{n}
{{\rm i}}^{n}j_{n}(\kappa|x|)Y_n^m({\hat x})\overline{Y_n^m({d})},\quad x\in\mathbb{R}^3,
\end{align}
where $d\in\mathbb R^3$ is a unit propagation direction vector. Then, for $ \xi\in\mathbb{R}^3$, we have

\begin{align}\label{3DPWJAE-1}
e^{-{\rm i}{ \xi}\cdot{x}}
=4\pi\sum_{n=0}^{+\infty}
\sum_{m=-n}^{n}
(-{{\rm i}})^{n}j_{n}(|{ \xi}||x|)\overline{Y_n^m({\hat x})}Y_n^m({\hat\xi}),\quad x\in\mathbb{R}^3,
\end{align}
where $\hat{ \xi}={ \xi}/|{ \xi}|$. Using \eqref{3DPWJAE-1}, we obtain 
\begin{align*}%\label{3DFEfd-Re}
\hat{f}(\xi)
&=\int_{B_{R}}f(x) e^{-{\rm i} \xi \cdot x}{\rm d}x
=\int_{B_{R}}f(x) \left(4\pi\sum_{n=0}^{+\infty}
\sum_{m=-n}^{n}
(-{{\rm i}})^{n}j_{n}(|{ \xi}||x|)\overline{Y_n^m({\hat x})}Y_n^m({\hat\xi})\right){\rm d}x
\nonumber\\
&=4\pi\sum_{n=0}^{+\infty}
\sum_{m=-n}^{n}(-{\rm i})^{n}Y_n^m({\hat\xi})
\int_{0}^{R}j_{n}(|{ \xi}|r)r^2\bigg(\int_{0}^{\pi}\int_{0}^{2\pi}f(r,\theta,\phi) \overline{Y_n^m(\theta,\phi)}\sin\theta{\rm d} \theta{\rm d}\phi\bigg){\rm d}r
\nonumber\\
&=4\pi\sum_{n=0}^{+\infty}
\sum_{m=-n}^{n}(-{\rm i})^{n}Y_n^m({\hat\xi})\int_{0}^{R}f_n^m(r)j_{n}(|{ \xi}|r)r^2{\rm d}r,
\end{align*}
which implies
\begin{align}\label{3Dxik}
\hat{f}(\xi)
=4\pi\sum_{n=0}^{+\infty}
\sum_{m=-n}^{n}(-{\rm i})^{n} \alpha_n^m Y_n^m({\hat\xi}),\quad |\xi|=\kappa.
\end{align}

From \eqref{3DPWJAE} and \cite[Proposition 3.1]{WZC-2021}, we have
\begin{align*}%\label{3DLp-3}
e^{-\kappa{x}\cdot{ d}}
=e^{{\rm i}({\rm i}\kappa){x}\cdot{ d}}
=4\pi\sum_{n=0}^{+\infty}
\sum_{m=-n}^{n}
{{\rm i}}^{n}j_{n}({\rm i}\kappa|x|)Y_n^m({\hat x})\overline{Y_n^m({d})}. 
\end{align*}
Then, for $|x|=r\leq R$ and $0<| s|< \kappa+K$, $K>0$ is a bounded constant, we get
\begin{align*}%\label{3DLp-4}
e^{-{ s}\cdot{x}}
=e^{-{|x| s}\cdot{ {\hat x}}}
=e^{-{r s}\cdot{ {\hat x}}}
=4\pi\sum_{n=0}^{+\infty}
\sum_{m=-n}^{n}
{{\rm i}}^{n}j_{n}({\rm i}|s|r)Y_n^m({\hat{s}})\overline{Y_n^m({\hat x})},
\end{align*}
where $\hat s=s/|s|$. 

Define
\begin{align}\label{3DLp-de}
\check{f}( s)&
=\int_{B_{R}}f(x) e^{- s \cdot x}{\rm d}x,\quad 0<| s|< \kappa+K. 
\end{align}
By  \eqref{3DLp-de}, we have
\begin{align}\label{3DLp-1}
\check{f}( s)&
=\int_{B_{R}}f(x) \left(4\pi\sum_{n=0}^{+\infty}
\sum_{m=-n}^{n}
{{\rm i}}^{n}j_{n}({\rm i}| s|r)Y_n^m({\hat{s}})\overline{Y_n^m({\hat x})}\right){\rm d}x
\nonumber\\
&=4\pi\sum_{n=0}^{+\infty}
\sum_{m=-n}^{n}{{\rm i}}^{n}Y_n^m({\hat s})
\int_{0}^{R}j_{n}({\rm i}|{ s}|r)r^2\bigg(\int_{0}^{\pi}\int_{0}^{2\pi}f(r,\theta,\phi) \overline{Y_n^m(\theta,\phi)}\sin\theta{\rm d} \theta{\rm d}\phi\bigg){\rm d}r
\nonumber\\
&=4\pi\sum_{n=0}^{+\infty}
\sum_{m=-n}^{n}{{\rm i}}^{n}Y_n^m({\hat s})\int_{0}^{R}f_n^m(r)j_{n}({\rm i}|{ s}|r)r^2{\rm d}r,\quad 0<| s|< \kappa+K,
\end{align}
which implies 
\begin{align}\label{3DLp-1-1}
\check{f}( s)
=4\pi\sum_{n=0}^{+\infty}
\sum_{m=-n}^{n}{{\rm i}}^{n}\beta_n^m Y_n^m({\hat s}),\quad
| s|=\kappa.
\end{align}

According to Theorem \ref{3DThe-ctc}, together with \eqref{3Dxik} and \eqref{3DLp-1-1}, we arrive at the following conclusion.

\begin{theorem}\label{3DThe-2}
Assume that $f\in L^2(B_R)$. Then the source $f$ is nonradiating if and only if $\hat{f}(\xi)=\check{f}( s)=0$ when $|\xi|=|s|=\kappa$. 
\end{theorem}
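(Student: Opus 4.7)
The plan is to reduce the theorem to Theorem \ref{3DThe-ctc} via the expansions \eqref{3Dxik} and \eqref{3DLp-1-1}, exploiting the orthonormality of the spherical harmonics on the unit sphere. The proof is fundamentally algebraic: both $\hat f$ restricted to $|\xi|=\kappa$ and $\check f$ restricted to $|s|=\kappa$ can be read off as spherical-harmonic series whose coefficients are exactly $\alpha_n^m$ and $\beta_n^m$ (up to factors $4\pi(-{\rm i})^n$ and $4\pi {\rm i}^n$), so vanishing of one set of quantities on the sphere of radius $\kappa$ is equivalent to vanishing of the other.

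For the forward implication, I would assume $f$ is nonradiating and invoke Theorem \ref{3DThe-ctc} to conclude $\alpha_n^m=\beta_n^m=0$ for every $n\ge 0$ and $-n\le m\le n$. Substituting these into the identities \eqref{3Dxik} and \eqref{3DLp-1-1} immediately yields $\hat f(\xi)=0$ on $|\xi|=\kappa$ and $\check f(s)=0$ on $|s|=\kappa$.

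For the converse, I would start from the hypothesis $\hat f(\xi)=0$ for all $|\xi|=\kappa$. By \eqref{3Dxik}, this reads
\begin{equation*}
4\pi\sum_{n=0}^{\infty}\sum_{m=-n}^{n}(-{\rm i})^{n}\alpha_n^m Y_n^m(\hat\xi)=0
\end{equation*}
as a function of $\hat\xi\in S^2$. Because $\{Y_n^m\}$ is a complete orthonormal system in $L^2(S^2)$, multiplying by $\overline{Y_{n'}^{m'}(\hat\xi)}$ and integrating over the unit sphere forces $\alpha_n^m=0$ for all indices. Applying the same orthogonality argument to \eqref{3DLp-1-1} yields $\beta_n^m=0$ for all indices. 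An appeal to Theorem \ref{3DThe-ctc} then finishes the proof.

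The only mildly delicate point is justifying term-by-term integration against $Y_{n'}^{m'}$ in the two series; since $f\in L^2(B_R)$, the coefficients $\alpha_n^m$ and $\beta_n^m$ are $\ell^2$-summable (with the spherical-Bessel weights being uniformly bounded on the compact sphere $|\xi|=\kappa$), so the series converge in $L^2(S^2)$ and the orthonormality extraction is legitimate. This is a routine convergence check rather than a real obstacle, and once handled, the equivalence follows directly from the already-established Theorem \ref{3DThe-ctc}.
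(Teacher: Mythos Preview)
Your proposal is correct and follows essentially the same approach as the paper: the paper's proof is a one-line appeal to Theorem~\ref{3DThe-ctc} together with the expansions \eqref{3Dxik} and \eqref{3DLp-1-1}, and you have simply made explicit the orthonormality argument for extracting the coefficients $\alpha_n^m,\beta_n^m$ from those series. Your additional remark on $L^2(S^2)$ convergence is a reasonable elaboration but not something the paper spells out.
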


\subsection{Near-field data}

First, applying Green's second theorem, from \eqref{bhwEq}, we have
\begin{align}\label{3DFt-near}
\hat{f}(\xi)&=\int_{B_{R}}f(y) e^{-{\rm i} \xi \cdot y}{\rm d}y
=-\int_{B_R}[(\Delta+\kappa^2)(\Delta-\kappa^2)u(y)] e^{-{\rm i} \xi \cdot y}{\rm d}y
\nonumber\\
&=-\int_{B_R}[(\Delta-\kappa^2)u(y)] [(\Delta+\kappa^2) e^{-{\rm i} \xi \cdot y}]{\rm d}y
\nonumber\\
&\quad-\int_{\partial B_R} \big[e^{-{\rm i} \xi \cdot y}\partial_\nu\big((\Delta-\kappa^2)u(y)\big)-
\big((\Delta-\kappa^2)u(y)\big)\partial_\nu e^{-{\rm i} \xi \cdot y}
\big]{\rm d}{s_{y}}\nonumber\\
&=-\int_{B_R}[(\Delta-\kappa^2)u(y)] [(-| \xi|^2+\kappa^2) e^{-{\rm i} \xi \cdot y}]{\rm d}y
\nonumber\\
&\quad-\int_{\partial B_R} \big[\partial_\nu \Delta u(y)-\kappa^2\partial_\nu u(y)+({{\rm i} \xi \cdot \nu})\Delta u(y)-
\kappa^2({{\rm i} \xi \cdot \nu})u(y)
\big]e^{-{\rm i} \xi \cdot y}{\rm d}{s_{y}},\quad  \xi\in \mathbb{R}^3.
\end{align}
By \eqref{3DFt-near}, it is evident that we can only determine $\hat{f}(\xi )$ on the sphere $|\xi|=\kappa, \xi\in\mathbb R^3$ as
\begin{align}\label{3DfF}
\hat{f}(\xi )&=\hat{U}(\xi)\notag\\
:&=-\int_{\partial B_R} \big[\partial_\nu \Delta u(y)-\kappa^2\partial_\nu u(y)+({{\rm i} \xi \cdot \nu})\Delta u(y)-
\kappa^2({{\rm i} \xi \cdot \nu})u(y)
\big]e^{-{\rm i} \xi \cdot y}{\rm d}{s_{y}}.
\end{align}

Furthermore, applying Green's second theorem, from \eqref{bhwEq} and \eqref{3DLp-1}, we get
\begin{align*}%\label{3DLp-near}
\check{f}( s)&
=\int_{B_R}f(y) e^{- s \cdot y}{\rm d}y
=-\int_{B_R}[(\Delta-\kappa^2)(\Delta+\kappa^2)u(y)] e^{- s \cdot y}{\rm d}y
\nonumber\\
&=-\int_{B_R}[(\Delta+\kappa^2)u(y)] [(\Delta-\kappa^2) e^{- s \cdot y}]{\rm d}y
\nonumber\\
&\quad-\int_{\partial B_R} \big[e^{- s \cdot y}\partial_\nu \big((\Delta+\kappa^2)u(y)\big)-
\big((\Delta+\kappa^2)u(y)\big)\partial_\nu e^{- s \cdot y}
\big]{\rm d}{s_{y}}\nonumber\\
&=-\int_{B_R}[(\Delta+\kappa^2)u(y)] [(| s|^2-\kappa^2) e^{- s \cdot y}]{\rm d}y
\nonumber\\
&\quad-\int_{\partial B_R} \big[\partial_\nu \Delta u(y)+\kappa^2\partial_\nu u(y)+({ s \cdot \nu})\Delta u(y)+
\kappa^2({ s \cdot \nu})u(y)
\big]e^{- s \cdot y}{\rm d}{s_{y}},\quad 0<| s|< \kappa+K.
\end{align*}
Hence, we can only determine $\check{f}( s)$ on $|s|=\kappa, s\in\mathbb R^3$ as
\begin{align}\label{3DLp-near}
\check{f}( s)
&=\check{V}( s )\nonumber\\
&:=-\int_{\partial B_R} \big[\partial_\nu \Delta u(y)+\kappa^2\partial_\nu u(y)+({ s \cdot \nu})\Delta u(y)+
\kappa^2({ s \cdot \nu})u(y)
\big]e^{- s \cdot y}{\rm d}{s_{y}}.
\end{align}

By Theorem \ref{3DThe-2}, \eqref{3DfF} and \eqref{3DLp-near}, we have the following Theorem. 

\begin{theorem}\label{3DThe-4}
Assume that $f\in L^2(B_R)$. Then the source $f$ is nonradiating if and only if $\hat{U}(\xi )=\check{V}(s)=0$ when $|\xi|=|s|=\kappa$. 
\end{theorem}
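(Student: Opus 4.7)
The plan is to deduce this theorem almost directly from Theorem \ref{3DThe-2} by identifying, on the critical spheres $|\xi|=\kappa$ and $|s|=\kappa$, the Fourier-type transform $\hat f(\xi)$ with the boundary functional $\hat U(\xi)$, and the Laplace-type transform $\check f(s)$ with $\check V(s)$. Both identities have already been assembled in \eqref{3DFt-near} and in the computation that leads to \eqref{3DLp-near}; what remains is to articulate the logic that ties them to nonradiation.

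First I would rewrite $f=-(\Delta+\kappa^2)(\Delta-\kappa^2)u$ using \eqref{bhwEq} and the factorization $\Delta^2-\kappa^4=(\Delta+\kappa^2)(\Delta-\kappa^2)$. Testing this against the kernel $e^{-{\rm i}\xi\cdot y}$ over $B_R$ and applying Green's second theorem to move the outer operator $(\Delta+\kappa^2)$ onto the exponential produces a bulk term weighted by $(\Delta+\kappa^2)e^{-{\rm i}\xi\cdot y}=(\kappa^2-|\xi|^2)e^{-{\rm i}\xi\cdot y}$. On the critical sphere $|\xi|=\kappa$ this factor vanishes, leaving only boundary contributions that exactly recombine into $\hat U(\xi)$, so $\hat f(\xi)=\hat U(\xi)$ for $|\xi|=\kappa$.

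The same device, carried out with the opposite factorization and the kernel $e^{-s\cdot y}$, exploits $(\Delta-\kappa^2)e^{-s\cdot y}=(|s|^2-\kappa^2)e^{-s\cdot y}$, which vanishes when $|s|=\kappa$, yielding $\check f(s)=\check V(s)$ on that sphere. Combining both identities with Theorem \ref{3DThe-2} then gives the equivalence in both directions: $f$ is nonradiating if and only if $\hat f$ and $\check f$ both vanish on the sphere of radius $\kappa$, which by the two identities just derived is equivalent to $\hat U$ and $\check V$ vanishing there.

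The main obstacle, and really the only substantive step, is the bookkeeping inside Green's second theorem for the biharmonic setting: one must carefully track the boundary integrals involving $u$, $\partial_\nu u$, $\Delta u$, and $\partial_\nu\Delta u$, together with the normal derivatives of the exponential kernel, and verify that the signs produced by $\nu$ being the outward unit normal to $\partial B_R$ match those in the definitions of $\hat U$ and $\check V$. There is no analytic subtlety, since $f\in L^2(B_R)$ together with interior elliptic regularity makes all traces on $\partial B_R$ well defined in the relevant Sobolev spaces, and the boundary integrals converge unambiguously.
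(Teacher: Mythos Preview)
Your proposal is correct and follows essentially the same approach as the paper: the paper's proof is a one-liner invoking Theorem~\ref{3DThe-2} together with the identities \eqref{3DfF} and \eqref{3DLp-near}, and you have correctly identified and re-derived precisely those ingredients. Your explanation of how Green's second theorem and the factorization $(\Delta+\kappa^2)(\Delta-\kappa^2)$ produce the bulk cancellation on the critical spheres is in fact a more detailed account of the computations the paper carries out in \eqref{3DFt-near} and the display preceding \eqref{3DLp-near}.
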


\subsection{Nonuniqueness}

For the three-dimensional case, we define the following null space. 

\begin{definition}
The source function $f\in L^2(B_R)$ is said to be in the null space $\mathcal{N}(R)$ if
\begin{align}\label{3DEpeq}
\int_{B_R}j_{0}(\kappa|x-y|)f(y){\rm d}{y}
=0,\quad
\int_{B_R}\Phi_{\rm M}(x,y)f(y){\rm d}{y}
=0,\quad |x| > R.
\end{align}
\end{definition}

For $x, y\in\mathbb{R}^3$, we note that $j_{0}$ has the spherical harmonic expansion
\begin{align}\label{3DJJFS}
j_{0}(\kappa|x-y|)
=4\pi\sum_{n=0}^{\infty}\sum_{m=-n}^{n}j_{n}(\kappa|x|)Y_{n}^{m}( {\hat x})j_{n}(\kappa|y|)\overline{Y_{n}^{m}( {\hat y})},\quad |x|>|y|.
\end{align}
Hence, we have from \eqref{3DEpeq} and \eqref{3DJJFS} that
\begin{align}\label{3DJJFS-1}
&\int_{B_R}j_{0}(\kappa|x-y|)f(y){\rm d}{y}
=4\pi\sum_{n=0}^{\infty}\sum_{m=-n}^{n}j_{n}(\kappa|x|)Y_{n}^{m}( {\hat x})\int_{B_R}j_{n}(\kappa|y|)\overline{Y_{n}^{m}( {\hat y})}f(y){\rm d}{y}\nonumber\\
&=4\pi\sum_{n=0}^{\infty}\sum_{m=-n}^{n}j_{n}(\kappa|x|)Y_{n}^{m}( {\hat x})\int_{0}^{R}f_n^m(r)j_{n}(\kappa r)r^2{\rm d}r\notag\\
&=4\pi\sum_{n=0}^{\infty}\sum_{m=-n}^{n}\alpha_n^m j_{n}(\kappa|x|)Y_{n}^{m}( {\hat x}).
\end{align}

By Theorem \ref{3DThe-nrs}, \ref{3DThe-ctc} and \eqref{3DJJFS-1}, we conclude the following Theorem.

\begin{theorem}\label{3DThe-5}
Assume that $f\in L^2(B_R)$. Then the source $f$ is nonradiating if and only if $f \in \mathcal{N}(R)$.
\end{theorem}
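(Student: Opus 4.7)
The plan is to follow the pattern of Proposition \ref{2DThe-FHnull} from the two-dimensional case, with the ingredients being Theorem \ref{3DThe-ctc} (the characterization through $\alpha_n^m,\beta_n^m$), the spherical-harmonic expansion \eqref{3DJJFS-1}, the series formula \eqref{ghct-1} for $g_{\rm M}$, and Proposition \ref{3DThe-gHct}. I would split the argument into the two implications.

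For the direct implication, suppose $f$ is nonradiating. Theorem \ref{3DThe-ctc} then forces $\alpha_n^m=\beta_n^m=0$ for all admissible $n,m$. Substituting $\alpha_n^m=0$ into \eqref{3DJJFS-1} makes the left-hand side of the first relation in \eqref{3DEpeq} vanish for $|x|>R$. Substituting $\beta_n^m=0$ into the expansion \eqref{ghct-1} gives $g_{\rm M}(x)=0$ for $|x|>R$, which is exactly the second relation in \eqref{3DEpeq}. Hence $f\in\mathcal{N}(R)$.

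For the converse, assume $f\in\mathcal{N}(R)$. The first identity in \eqref{3DEpeq}, combined with \eqref{3DJJFS-1}, yields
\[
\sum_{n=0}^{\infty}\sum_{m=-n}^{n}\alpha_n^m\, j_n(\kappa|x|)\,Y_n^m(\hat x)=0,\qquad |x|>R.
\]
Testing this identity against $\overline{Y_n^m(\hat x)}$ on each sphere $|x|=r>R$ and using the orthonormality of the spherical harmonics yields $\alpha_n^m\,j_n(\kappa r)=0$ for every $r>R$; since $j_n(\kappa\,\cdot)$ is a nontrivial real-analytic function, it cannot vanish on any interval, so $\alpha_n^m=0$ for all $n,m$. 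The second identity in \eqref{3DEpeq} says $g_{\rm M}(x)=0$ for $|x|>R$; in particular, because $f\in L^2(B_R)$ implies $g_{\rm M}\in H^{2}_{\rm loc}(\mathbb R^3)$, the trace $g_{\rm M}|_{\partial B_R}$ is well-defined and vanishes, so Proposition \ref{3DThe-gHct} gives $\beta_n^m=0$ for all $n,m$. Theorem \ref{3DThe-ctc} then concludes that $f$ is nonradiating.

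The only delicate step I anticipate is the term-by-term identification of the coefficients in the reverse direction: it requires orthonormality of $\{Y_n^m\}$ on the unit sphere together with the non-vanishing of $j_n(\kappa\,\cdot)$ on every nonempty subinterval of $(R,\infty)$. Both are standard facts, so no new technical machinery beyond what has already been developed in Sections \ref{S:2d} and \ref{S:3d} should be required, and the proof reduces to essentially a direct translation of the planar argument into three-dimensional language.
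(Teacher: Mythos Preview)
Your proposal is correct and follows essentially the same route as the paper's proof: both directions are reduced to the vanishing of the coefficients $\alpha_n^m$ and $\beta_n^m$ via Theorem~\ref{3DThe-ctc}, with \eqref{3DJJFS-1} handling the first integral in \eqref{3DEpeq} and the expansion of $g_{\rm M}$ (equivalently Proposition~\ref{3DThe-gHct}) handling the second. Your treatment of the converse is in fact slightly more explicit than the paper's, which simply cites \eqref{3DBihss} and \eqref{3DJJFS-1} without spelling out the orthonormality-plus-analyticity argument you give for extracting $\alpha_n^m=0$.
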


\begin{proof}
If $f\in L^2(B_R)$ is a nonradiating source for the biharmonic wave equation, then, as a consequence of Theorems \ref{3DThe-nrs} and \ref{3DThe-ctc}, along with the relation \eqref{3DJJFS-1}, it follows that \eqref{3DEpeq} can be derived, which in turn implies $f$ belongs to the set $\mathcal{N}(R)$.

Conversely, if $f\in L^2(B_R)$ is an element of $\mathcal{N}(R)$, it satisfies \eqref{3DEpeq}. Additionally, according to \eqref{3DBihss} and \eqref{3DJJFS-1}, the conditions $\alpha_n^m=\beta_n^m=0$ hold for all $n = 0,1,\cdots, m = -n,\cdots, n$. By virtue of Theorem \ref{3DThe-ctc}, it is established that $f$ qualifies as a nonradiating source.
\end{proof}

\section{Examples of nonradiating sources}\label{S:ex}

In this section, we explicitly construct several examples of nonradiating sources to illustrate that the null space $\mathcal{N}(R)$ is not empty.

\subsection{General nonradiating sources}

Consider a smooth function $u\in C_{0}^{\infty}(D)$, where $D$ represents a domain with compact support. By applying the operator $-(\Delta^2-\kappa^4)$ to the function $u$, resulting in the function $f$, we obtain
\begin{equation}\label{Ex1}
f(x)=-(\Delta^2-\kappa^4)u(x),\quad x\in\mathbb{R}^d.
\end{equation}
By Definition \ref{def}, it is evident that the function $f$ given in \eqref{Ex1} also belongs to the space $C_{0}^{\infty}(D)$ and constitutes a nonradiating source for the biharmonic wave equation. Below, we demonstrate through equivalent characterizations that the function $f$ is a nonradiating source.

For $u\in C_{0}^{\infty}(B_R)$, we have 
$\partial_\nu \big((\Delta+\kappa^2)u(y)\big)|_{\partial B_R}=\big((\Delta+\kappa^2)u(y)\big)|_{\partial B_R}=0$. It follows from the integration by parts that 
\begin{align*}%\label{FC-1}
f_{\rm M}(x)
&=-\int_{B_{R}}\Phi_{\rm M}(x,y)f(y){\rm d}y
=\int_{B_{R}}\Phi_{\rm M}(x,y)\big((\Delta-\kappa^2)(\Delta+\kappa^2)u(y)\big){\rm d}y\nonumber\\
&=\int_{B_{R}}\big((\Delta-\kappa^2)\Phi_{\rm M}(x,y)\big)\big((\Delta+\kappa^2)u(y)\big){\rm d}y\nonumber\\
&\quad+\int_{\partial B_R}\big[\Phi_{\rm M}(x,y)\partial_\nu \big((\Delta+\kappa^2)u(y)\big)-\big((\Delta+\kappa^2)u(y)\big)\partial_{\nu(y)} \Phi_{\rm M}(x,y)\big]{\rm d} s_{y}\nonumber\\
&=\int_{B_{R}}\big((\Delta-\kappa^2)\Phi_{\rm M}(x,y)\big)\big((\Delta+\kappa^2)u(y)\big){\rm d}y=0,\quad |x|>R.
\end{align*}
Similarly, we may show that 
\begin{align*}%\label{FC-2}
f_{\rm H}(x)
=-\int_{B_{R}}\Phi_{\rm H}(x,y)f(y){\rm d}y=0,\quad |x|>R.
\end{align*}
According to Theorem \ref{2DThe-nrs} or Theorem \ref{3DThe-nrs}, the source function $f$ is nonradiating. 

Subsequently, we validate Theorem \ref{2DThe-cn} or Theorem \ref{3DThe-ctc} by showing that $\alpha_n=\beta_n=0$ for all $n\in\mathbb{Z}$. For the sake of brevity and generality, we focus our attention on the case where $d=2$.

For all $n\in \mathbb{Z}$, from \eqref{Ex1}, we have from the integration by parts that 
\begin{align*}%\label{alpha_n}
 \alpha_n&=\frac{1}{2\pi}\int_{B_R}f(y)J_{n}(\kappa |y|) e^{- {\rm i} n {\rm arg}( {y})}{\rm d}y
 =-\frac{1}{2\pi}\int_{B_R}\big((\Delta+\kappa^2)(\Delta-\kappa^2)u(y)\big)J_{n}(\kappa |y|) e^{- {\rm i} n {\rm arg}( {y})}{\rm d}y
 \nonumber\\
 &=-\frac{1}{2\pi}\int_{B_R}\big((\Delta-\kappa^2)u(y)\big)\big[(\Delta+\kappa^2)\big(J_{n}(\kappa |y|) e^{- {\rm i} n {\rm arg}( {y})}\big)\big]{\rm d}y
 \nonumber\\
 &=-\frac{1}{2\pi}\int_{0}^{R}\int_{0}^{2\pi} \bigg[\frac{1}{r}\bigg(\frac{\rm d}{{\rm d} r}\bigg(r\frac{\rm d}{{\rm d} r}\bigg)+\frac{1}{r}\frac{{\rm d}^2}{{\rm d} \theta^2}-\kappa^2 r\bigg)u(r,\theta)\bigg]\nonumber\\
 &\hspace{3cm}\times\bigg[\frac{1}{r}\bigg(\frac{\rm d}{{\rm d} r}\bigg(r\frac{\rm d}{{\rm d} r}\bigg)+\frac{1}{r}\frac{{\rm d}^2}{{\rm d} \theta^2}+\kappa^2 r\bigg) \big(J_{n}(\kappa r)e^{- {\rm i} n \theta}\big)\bigg]r {\rm d}\theta{\rm d}r\nonumber\\
&=-\frac{1}{2\pi}\int_{0}^{R}\int_{0}^{2\pi} \bigg[\frac{1}{r}\bigg(\frac{\rm d}{{\rm d} r}\bigg(r\frac{\rm d}{{\rm d} r}\bigg)+\frac{1}{r}\frac{{\rm d}^2}{{\rm d} \theta^2}-\kappa^2 r\bigg)u(r,\theta)\bigg]\nonumber\\
 &\hspace{3cm}\times\bigg[\bigg(\frac{\rm d}{{\rm d} r}\bigg(r\frac{\rm d}{{\rm d} r}\bigg)+\kappa^2 r-\frac{n^2}{r}\bigg) J_{n}(\kappa r)\bigg]e^{- {\rm i} n \theta} {\rm d}\theta{\rm d}r=0,
\end{align*}
where we utilize the property
\[
\bigg[\frac{\rm d}{{\rm d} r}\left(r\frac{\rm d}{{\rm d} r}\right)+\kappa^2 r-\frac{n^2}{r}\bigg] J_{n}(\kappa r)=0\quad \forall\, n\in\mathbb Z. 
\]
We can similarly show that $\beta_n=0$ for all $n\in\mathbb Z$ by noting 
\[
\bigg[\frac{\rm d}{{\rm d} r}\bigg(r\frac{\rm d}{{\rm d} r}\bigg)-\kappa^2 r-\frac{n^2}{r}\bigg] J_{n}({\rm i}\kappa r)=0. 
\]

\subsection{A two-dimensional nonradiating source}

For $\kappa > 0$ and $R > 0$, we consider the condition $J_0(\kappa R) = 0$, indicating that $\kappa R$ is a root of the Bessel function of order zero. Define
\begin{equation}\label{FM-1}
f_{\rm M}(x)=\left\{
\begin{aligned}
&\frac{J_0^3(\kappa |x|)}{\int_0^RJ_0^4(\kappa r) r{\rm d} r}-\frac{J_0^2(\kappa |x|)}{\int_0^RJ_0^3(\kappa r) r{\rm d} r}, &\quad |x|<R,\\
&0, &\quad |x|\geq R.
\end{aligned}
\right.
\end{equation}
It is clear to note $f_{\rm M}(x)|_{\partial B_R}=0$. Based on the identity $J_0'(z)=-J_1(z)$, a straightforward calculation gives
\begin{align*}%\label{FM-3}
\partial_\nu f_{\rm M}(y)|_{\partial B_R}&=\lim_{h\to 0^+}\nu(y)\cdot\nabla f_{\rm M}(y-h\nu(y)) 
=\frac{-3\kappa J_0^2(\kappa R)J_{1}(\kappa R)}{\int_0^RJ_0^4(\kappa r) r{\rm d} r}+\frac{2\kappa J_0(\kappa R)J_{1}(\kappa R)}{\int_0^RJ_0^3(\kappa r) r{\rm d} r}=0,
\end{align*}
which indicates that $f_{\rm M}(x)\in C_0^1(\mathbb R^2)$. 

Consider the expression 
\begin{equation}\label{Ex2}
f(x)=(\Delta-\kappa^2)f_{\rm M}(x), 
\end{equation}
which is shown to be a nonradiating source for the two-dimensional biharmonic wave equation. 

It follows from \eqref{FM-1} and \eqref{Ex2} that 
\begin{align}\label{FM-4}
f(x) &=(\Delta-\kappa^2)f_{\rm M}(x)=
\left\{
\begin{aligned}
&\frac{(\Delta-\kappa^2)J_0^3(\kappa |x|)}{\int_0^RJ_0^4(\kappa r) r{\rm d} r}
-\frac{(\Delta-\kappa^2)J_0^2(\kappa |x|)}{\int_0^RJ_0^3(\kappa r) r{\rm d} r}, &\quad |x|<R,\\
&0, &\quad |x|\geq R.
\end{aligned}
\right.\nonumber\\
&=
\left\{
\begin{aligned}
&\frac{\frac{1}{r}\big(\frac{\rm d}{{\rm d} r}\big(r\frac{\rm d}{{\rm d} r}\big)-\kappa^2 r\big)J_0^3(\kappa r)}{\int_0^RJ_0^4(\kappa r) r{\rm d} r}
-\frac{\frac{1}{r}\big(\frac{\rm d}{{\rm d} r}\big(r\frac{\rm d}{{\rm d} r}\big)-\kappa^2 r\big)J_0^2(\kappa r)}{\int_0^RJ_0^3(\kappa r) r{\rm d} r}, &\quad |x|<R,\\
&0, &\quad |x|\geq R.
\end{aligned}
\right.
\end{align}
Using the integration by parts and the boundary conditions $f_{\rm M}(x)|_{\partial B_R}=\partial_\nu f_{\rm M}(x)|_{\partial B_R}=0$, we have
\begin{align}\label{FM-5}
&\int_{B_{R}}\Phi_{\rm H}(x,y)f(y){\rm d}y
=\int_{B_{R}}\Phi_{\rm H}(x,y)\left((\Delta-\kappa^2)f_{\rm M}(y)\right){\rm d}y\nonumber\\
&=\int_{B_{R}}\big((\Delta-\kappa^2)\Phi_{\rm H}(x,y)\big)f_{\rm M}(y){\rm d}y
+\int_{\partial B_R}\big(\Phi_{\rm H}(x,y)\partial_\nu f_{\rm M}(y)-f_{\rm M}(y)\partial_{\nu(y)} \Phi_{\rm H}(x,y)\big){\rm d} s_{y}\nonumber\\
&=\int_{B_{R}}\big((\Delta-\kappa^2)\Phi_{\rm H}(x,y)\big)f_{\rm M}(y){\rm d}y
=-2\kappa^2\int_{B_{R}}\Phi_{\rm H}(x,y)f_{\rm M}(y){\rm d}y,\quad |x|>R. 
\end{align}
On the other hand, we have from a straightforward calculation that 
\begin{align}\label{FM-6}
&\int_{B_{R}}\Phi_{\rm H}(x,y)f_{\rm M}(y){\rm d}y
=\int_{B_{R}}\frac{{\rm i}}{4}H_{0}^{(1)}(\kappa|x-y|)\left(\frac{J_0^3(\kappa |y|)}{\int_0^RJ_0^4(\kappa r) r{\rm d} r}-\frac{J_0^2(\kappa |y|)}{\int_0^RJ_0^3(\kappa r) r{\rm d} r}\right){\rm d}y\nonumber\\
&=\frac{{\rm i}}{4}\sum_{n=-\infty}^{\infty}H_{n}^{(1)}(\kappa|x|)e^{{\rm i} n {\rm arg}(x)}\int_{B_{R}} \left(\frac{J_0^3(\kappa |y|)}{\int_0^RJ_0^4(\kappa r) r{\rm d} r}-\frac{J_0^2(\kappa |y|)}{\int_0^RJ_0^3(\kappa r) r{\rm d} r}\right) J_{n}(\kappa |y|)e^{-{\rm i} n {\rm arg}(y)}{\rm d}y\nonumber\\
&=\frac{{\rm i}}{4}H_{0}^{(1)}(\kappa|x|) \left(\frac{2\pi\int_{0}^{R}J_0^4(\kappa r)r{\rm d}r}{\int_0^R J_0^4(\kappa r) r{\rm d} r}-\frac{2\pi\int_{0}^{R}J_0^3(\kappa r)r{\rm d}r}{\int_0^RJ_0^3(\kappa r) r{\rm d} r}\right)=0,\quad |x|>R.
\end{align}
Combining \eqref{FM-5} and \eqref{FM-6} yields 
\begin{align}\label{FM-07}
 f_{\rm H}(x)=-\int_{B_{R}}\Phi_{\rm H}(x,y)f(y){\rm d}y=0,\quad |x|>R. 
\end{align}

Similarly, we obtain from the integration by parts and \eqref{Phi_HM} that 
\begin{align}\label{FM-7}
f_{\rm M}(x)&=-\int_{B_{R}}\Phi_{\rm M}(x,y)f(y){\rm d}y
=-\int_{B_{R}}\Phi_{\rm M}(x,y)\big((\Delta-\kappa^2)f_{\rm M}(y)\big){\rm d}y\nonumber\\
&=-\int_{B_{R}}\big((\Delta-\kappa^2)\Phi_{\rm M}(x,y)\big)f_{\rm M}(y){\rm d}y\nonumber\\
&\quad-\int_{\partial B_R}\big(\Phi_{\rm M}(x,y)\partial_\nu f_{\rm M}(y)-f_{\rm M}(y)\partial_{\nu(y)} \Phi_{\rm M}(x,y)\big){\rm d} s_{y}\nonumber\\
&=-\int_{B_{R}}[(\Delta-\kappa^2)\Phi_{\rm M}(x,y)]f_{\rm M}(y){\rm d}y=0,\quad |x|>R.
\end{align}
Combining \eqref{FM-07} and \eqref{FM-7} leads to 
\begin{align*}%\label{FM-8}
u(x)=\int_{B_R}G(x,y)f(y) dy=\frac{1}{2\kappa^2}(f_{\rm H}(x)-f_{\rm M}(x))=0,\quad |x|>R.
\end{align*}
which implies that $f(x)$ is a nonradiating source for the two-dimensional biharmonic wave equation.

Next, we verify that $\alpha_n=\beta_n=0$ for all $n\in \mathbb{Z}$. Noting 
\begin{align*}%\label{bess}
J_0^k(\kappa r)\frac{\rm d}{{\rm d} r}\bigg(r\frac{\rm d}{{\rm d} r}J_0(\kappa r)\bigg)+\kappa^2 r J_0^{k+1}(\kappa r)&=\bigg[\frac{\rm d}{{\rm d} r}\bigg(r\frac{\rm d}{{\rm d} r}J_0(\kappa r)\bigg)+\kappa^2 r J_0(\kappa r)\bigg]J_0^k(\kappa r) \nonumber\\
&=0\times J_0^k (\kappa r)=0,
\end{align*}
we have for $k\geq 2$ that 
\begin{align*}%\label{bess-1}
&-\int_{0}^{R}\big[\kappa^2 r J_0^k (\kappa r)\big]J_{0}(\kappa r){\rm d}r
=\int_{0}^{R}\bigg[\bigg(\frac{\rm d}{{\rm d} r}\bigg(r\frac{\rm d}{{\rm d} r}J_0(\kappa r)\bigg)\bigg]J_{0}^k(\kappa r){\rm d}r\nonumber\\
&=\bigg(r\frac{\rm d}{{\rm d} r}J_0(\kappa r)\bigg)J_{0}^k(\kappa r)\bigg|^{R}_{0}
-\int_{0}^{R}\bigg(r\frac{\rm d}{{\rm d} r}J_0(\kappa r)\bigg)\bigg(\frac{\rm d}{{\rm d} r}J_{0}^k(\kappa r)\bigg){\rm d}r\nonumber\\
&=-\int_{0}^{R}\bigg(\frac{\rm d}{{\rm d} r}J_0(\kappa r)\bigg)\bigg(r\frac{\rm d}{{\rm d} r}J_{0}^k(\kappa r)\bigg){\rm d}r\nonumber\\
&=-J_0(\kappa r)\bigg(r\frac{\rm d}{{\rm d} r}J_{0}^k(\kappa r)\bigg)\bigg|^{R}_{0}+\int_{0}^{R}J_0(\kappa r)\bigg[\frac{\rm d}{{\rm d} r}\bigg(r\frac{\rm d}{{\rm d} r}J_{0}^k(\kappa r)\bigg)\bigg]{\rm d}r\nonumber\\
&=-J_0(\kappa r)\bigg(rk J_{0}^{k-1}(\kappa r)\frac{\rm d}{{\rm d} r}J_{0}(\kappa r)\bigg)\bigg|^{R}_{0}+\int_{0}^{R}J_0(\kappa r)\bigg[\frac{\rm d}{{\rm d} r}\bigg(r\frac{\rm d}{{\rm d} r}J_{0}^k(\kappa r)\bigg)\bigg]{\rm d}r\nonumber\\
&=\int_{0}^{R}\bigg[\frac{\rm d}{{\rm d} r}\bigg(r\frac{\rm d}{{\rm d} r}J_{0}^k(\kappa r)\bigg)\bigg]J_0(\kappa r){\rm d}r,
\end{align*}
which implies 
\begin{align}\label{bess-2}
\int_{0}^{R}\bigg[\bigg(\frac{\rm d}{{\rm d} r}\bigg(r\frac{\rm d}{{\rm d} r}\bigg)-\kappa^2 r\bigg)J_0^k(\kappa r)\bigg]J_{0}(\kappa r){\rm d}r=-2\kappa^2\int_{0}^{R}  J_0^{k+1}(\kappa r)r{\rm d}r,\quad k\geq 2.
\end{align}

Hence, for all $n\in \mathbb{Z}$, from \eqref{bess-2} and \eqref{abn}, we get
\begin{align*}%\label{Hmnrs-2}
\alpha_n&=\frac{1}{2\pi}\int_{B_{R}} \bigg[\frac{\frac{1}{r}\big[\frac{\rm d}{{\rm d} r}\big(r\frac{\rm d}{{\rm d} r}\big)-\kappa^2 r\big]J_0^3(\kappa r)}{\int_0^RJ_0^4(\kappa r) r{\rm d} r}
-\frac{\frac{1}{r}\big[\frac{\rm d}{{\rm d} r}\big(r\frac{\rm d}{{\rm d} r}\big)-\kappa^2 r\big]J_0^2(\kappa r)}{\int_0^R J_0^3(\kappa r) r{\rm d} r}\bigg]J_{n}(\kappa |y|)e^{-{\rm i} n {\rm arg}(y)}{\rm d} y\nonumber\\
&=\frac{1}{2\pi} \Bigg(\frac{\int_{0}^{R}\frac{1}{r}\big[\big(\frac{\rm d}{{\rm d} r}\big(r\frac{\rm d}{{\rm d} r}\big)-\kappa^2 r\big)J_0^3(\kappa r)\big]J_{n}(\kappa r)r{\rm d}r\int_{0}^{2\pi}e^{-{\rm i} n \theta }{\rm d}\theta}{\int_0^RJ_0^4(\kappa r) r{\rm d} r}\nonumber\\
&\hspace{3cm}
-\frac{\int_{0}^{R}\frac{1}{r}\big[\big(\frac{\rm d}{{\rm d} r}\big(r\frac{\rm d}{{\rm d} r}\big)-\kappa^2 r\big)J_0^2(\kappa r)\big]J_{n}(\kappa r)r{\rm d}r\int_{0}^{2\pi}e^{-{\rm i} n \theta }{\rm d}\theta}{\int_0^RJ_0^3(\kappa r) r{\rm d} r}\Bigg),
\end{align*}
which shows 
\begin{align*}%\label{Hmnrs-2-1}
\alpha_n
&=\frac{1}{2\pi} \Bigg(\frac{\int_{0}^{R}\frac{1}{r}\big[\big(\frac{\rm d}{{\rm d} r}\big(r\frac{\rm d}{{\rm d} r}\big)-\kappa^2 r\big)J_0^3(\kappa r)\big]J_{n}(\kappa r)r{\rm d}r\times 0}{\int_0^RJ_0^4(\kappa r) r{\rm d} r}
\nonumber\\
&\hspace{3cm}
-\frac{\int_{0}^{R}\frac{1}{r}\big[\big(\frac{\rm d}{{\rm d} r}\big(r\frac{\rm d}{{\rm d} r}\big)-\kappa^2 r\big)J_0^2(\kappa r)\big]J_{n}(\kappa r)r{\rm d}r\times 0}{\int_0^RJ_0^3(\kappa r) r{\rm d} r}\Bigg) =0,\quad n\neq 0
\end{align*}
and
\begin{align*}%\label{Hmnrs-2-2}
\alpha_0
&=\frac{1}{2\pi} \Bigg(\frac{2\pi\int_{0}^{R}\big[\big(\frac{\rm d}{{\rm d} r}\big(r\frac{\rm d}{{\rm d} r}\big)-\kappa^2 r\big)J_0^3(\kappa r)\big]J_{0}(\kappa r){\rm d}r}{\int_0^RJ_0^4(\kappa r) r{\rm d} r}\\
&\hspace{2cm}-\frac{2\pi\int_{0}^{R}\big[\big(\frac{\rm d}{{\rm d} r}\big(r\frac{\rm d}{{\rm d} r}\big)-\kappa^2 r\big)J_0^2(\kappa r)\big]J_{0}(\kappa r){\rm d}r}{\int_0^RJ_0^3(\kappa r) r{\rm d} r}\Bigg) \nonumber\\
&= \Bigg(\frac{(-2\kappa^2)\int_{0}^{R}  J_0^4(\kappa r)r{\rm d}r}{\int_0^RJ_0^4(\kappa r) r{\rm d} r}
-\frac{(-2\kappa^2)\int_{0}^{R}  J_0^3(\kappa r)r{\rm d}r}{\int_0^RJ_0^3(\kappa r) r{\rm d} r}\Bigg)=0.
\end{align*}

Noting 
\begin{align}\label{bess-4}
\bigg[\frac{\rm d}{{\rm d} r}\bigg(r\frac{\rm d}{{\rm d} r}J_0({\rm i}\kappa r)\bigg)-\kappa^2 r J_0({\rm i}\kappa r)\bigg]J_0^k(\kappa r)=0\times J_0^k(\kappa r)=0,\quad k\geq 2,
\end{align}
we can similarly show that 
\begin{align}\label{bess-7}
&\int_{0}^{R}\big(\kappa^2 r J_0({\rm i}\kappa r)\big)J^k_{0}(\kappa r){\rm d}r
=\int_{0}^{R}\bigg[J_{0}({\rm i}\kappa r)\bigg(\frac{\rm d}{{\rm d} r}\bigg(r\frac{\rm d}{{\rm d} r}J^k_{0}(\kappa r)\bigg)\bigg)\bigg]{\rm d}r. 
\end{align}
Combining \eqref{bess-4} and \eqref{bess-7} yields 
\begin{align}\label{bess-8}
&\int_{0}^{R}\bigg[\bigg(\frac{\rm d}{{\rm d} r}\bigg(r\frac{\rm d}{{\rm d} r}\bigg)-\kappa^2 r\bigg)J_0^k(\kappa r)\bigg]J_{0}({\rm i}\kappa r){\rm d}r\nonumber\\
&=
\int_{0}^{R}\bigg[\frac{\rm d}{{\rm d} r}\bigg(r\frac{\rm d}{{\rm d} r}J_0({\rm i}\kappa r)\bigg)-\kappa^2 r J_0({\rm i}\kappa r)\bigg]J_0^k(\kappa r){\rm d}r=0,\quad k\geq 2.
\end{align}
Then, for all $n\in \mathbb{Z}$, we obtain from \eqref{bess-8} and \eqref{abn} that 
\begin{align*}%\label{mHmnrs}
\beta_n&=\frac{1}{2\pi}\int_{B_{R}} \bigg[\frac{\frac{1}{r}\big[\frac{\rm d}{{\rm d} r}\big(r\frac{\rm d}{{\rm d} r}\big)-\kappa^2 r\big]J_0^3(\kappa r)}{\int_0^RJ_0^4(\kappa r) r{\rm d} r}
-\frac{\frac{1}{r}\big[\frac{\rm d}{{\rm d} r}\big(r\frac{\rm d}{{\rm d} r}\big)-\kappa^2 r\big]J_0^2(\kappa r)}{\int_0^RJ_0^3(\kappa r) r{\rm d} r}\bigg]J_{n}({\rm i}\kappa |y|)e^{-{\rm i} n {\rm arg}(y)}{\rm d}y\nonumber\\
&=\frac{1}{2\pi} \Bigg(\frac{\int_{0}^{R}\frac{1}{r}\big[\big(\frac{\rm d}{{\rm d} r}\big(r\frac{\rm d}{{\rm d} r}\big)-\kappa^2 r\big)J_0^3(\kappa r)\big]J_{n}({\rm i}\kappa r)r{\rm d}r\int_{0}^{2\pi}e^{-{\rm i} n \theta }{\rm d}\theta}{\int_0^RJ_0^4(\kappa r) r{\rm d} r}\nonumber\\
&\hspace{3cm}-\frac{\int_{0}^{R}\frac{1}{r}\big[\big(\frac{\rm d}{{\rm d} r}\big(r\frac{\rm d}{{\rm d} r}\big)-\kappa^2 r\big)J_0^2(\kappa r)\big]J_{n}({\rm i}\kappa r)r{\rm d}r\int_{0}^{2\pi}e^{-{\rm i}n \theta }{\rm d}\theta}{\int_0^RJ_0^3(\kappa r) r{\rm d} r}\Bigg) \nonumber\\
&=\frac{1}{2\pi} \Bigg(\frac{\int_{0}^{R}\big[\big(\frac{\rm d}{{\rm d} r}\big(r\frac{\rm d}{{\rm d} r}\big)-\kappa^2 r\big)J_0^3(\kappa r)\big]J_{n}({\rm i}\kappa r)r{\rm d}r\int_{0}^{2\pi}e^{-{\rm i} n \theta }{\rm d}\theta}{\int_0^RJ_0^4(\kappa r) r{\rm d} r}\nonumber\\
&\hspace{3cm}-\frac{\int_{0}^{R}\big[\big(\frac{\rm d}{{\rm d} r}\big(r\frac{\rm d}{{\rm d} r}\big)-\kappa^2 r\big)J_0^2(\kappa r)\big]J_{n}({\rm i}\kappa r)r{\rm d}\int_{0}^{2\pi}e^{-{\rm i}n \theta }{\rm d}\theta}{\int_0^RJ_0^3(\kappa r) r{\rm d} r}\Bigg) \nonumber\\
&=\frac{1}{2\pi} \Bigg(\frac{0}{\int_0^RJ_0^4(\kappa r) r{\rm d} r}-\frac{0}{\int_0^RJ_0^3(\kappa r) r{\rm d} r}\Bigg) =0.
\end{align*}

\subsection{A three-dimensional nonradiating source}

Analogously, we can employ a similar approach to construct a nonradiating source for the three-dimensional biharmonic wave equation.

For $\kappa>0, R>0$, let $j_0(\kappa R)=0$ and
\begin{align}\label{MHFs}
g_{\rm H}(x)=\left\{
\begin{aligned}
&\frac{j_0^{m_1}(\kappa |x|)}{\int_0^R j_0^{m_1}(\kappa r) j_0({\rm i}\kappa r)r^2{\rm d} r}-\frac{j_0^{m_2}(\kappa |x|)}{\int_0^R j_0^{m_2}(\kappa r)j_0({\rm i}\kappa r) r^2{\rm d} r}, &\quad |x|<R,\\
&0, &\quad |x|\geq R, 
\end{aligned}
\right.
\end{align}
where $m_1, m_2$ are distinct positive integers greater than 2, i.e., $m_1\neq m_2$. Obviously, we have $g_{\rm H}(x)=\partial_\nu g_{\rm H}(x)=0$ on $\partial B_R$.
Let 
\begin{align}\label{Ex3}
f(x)&=(\Delta+\kappa^2)g_{\rm H}(x)
=\left\{
\begin{aligned}
&\frac{(\Delta+\kappa^2)j_0^{m_1}(\kappa |x|)}{\int_0^R j_0^{m_1}(\kappa r) j_0({\rm i}\kappa r)r^2{\rm d} r}-\frac{(\Delta+\kappa^2)j_0^{m_2}(\kappa |x|)}{\int_0^R j_0^{m_2}(\kappa r)j_0({\rm i}\kappa r) r^2{\rm d} r}, &\quad |x|<R,\\
&0, &\quad |x|\geq R, 
\end{aligned}
\right.\nonumber\\
&=\left\{
\begin{aligned}
&\frac{\frac{1}{r^2}\big(\frac{\rm d}{{\rm d} r}\big(r^2\frac{\rm d}{{\rm d} r}\big)
+\kappa^2 r^2\big)j_0^{m_1}(\kappa r)}{\int_0^R j_0^{m_1}(\kappa r) j_0({\rm i}\kappa r)r^2{\rm d} r}-\frac{\frac{1}{r^2}\big(\frac{\rm d}{{\rm d} r}\big(r^2\frac{d}{d r}\big)
+\kappa^2 r^2\big)j_0^{m_2}(\kappa r)}{\int_0^R j_0^{m_2}(\kappa r)j_0({\rm i}\kappa r) r^2{\rm d} r}, &\quad |x|<R,\\
&0, &\quad |x|\geq R. 
\end{aligned}
\right.
\end{align}
Subsequently, we proceed to demonstrate that the function defined in \eqref{Ex3} is indeed a nonradiating source for the three-dimensional biharmonic wave equation.

By the boundary conditions of $g_{\rm H}$ on $\partial B$ and integration by parts, we have
\begin{align*}%\label{3DFM-5}
&\int_{B_{R}}\Phi_{\rm M}(x,y)f(y){\rm d}y
=\int_{B_{R}}\Phi_{\rm M}(x,y)\big((\Delta+\kappa^2)g_{\rm H}(y)\big){\rm d}y\nonumber\\
&=\int_{B_{R}}\big((\Delta+\kappa^2)\Phi_{\rm M}(x,y)\big)g_{\rm H}(y){\rm d}y
+\int_{\partial B_R}\big(\Phi_{\rm M}(x,y)\partial_\nu g_{\rm H}(y)-g_{\rm H}(y)\partial_{\nu(y)} \Phi_{\rm M}(x,y)\big){\rm d} s_{y}\nonumber\\
&=\int_{B_{R}}\big((\Delta+\kappa^2)\Phi_{\rm M}(x,y)\big)g_{\rm H}(y){\rm d}y
=2\kappa^2\int_{B_{R}}\Phi_{\rm M}(x,y)g_{\rm H}(y){\rm d}y,\quad |x|>R. 
\end{align*}
A straightforward calculation gives 
\begin{align*}%\label{3DFM-6}
&\int_{B_{R}}\Phi_{\rm M}(x,y)g_{\rm H}(y){\rm d}y\nonumber\\
&=\int_{B_{R}}\frac{e^{-\kappa|x-y|}}{4\pi|x-y|} \left(\frac{j_0^{m_1}(\kappa |y|)}{\int_0^R j_0^{m_1}(\kappa r) j_0({\rm i}\kappa r)r^2{\rm d} r}-\frac{j_0^{m_2}(\kappa |y|)}{\int_0^R j_0^{m_2}(\kappa r)j_0({\rm i}\kappa r) r^2{\rm d} r}\right){\rm d}y\nonumber\\
&=-\kappa\sum_{n=0}^{\infty}\sum_{m=-n}^{n}h_{n}^{(1)}({\rm i}\kappa|x|)Y_{n}^{m}( {\hat x})\notag\\
&\hspace{2cm}\times \int_{B_{R}}\left(\frac{j_0^{m_1}(\kappa |y|)}{\int_0^R j_0^{m_1}(\kappa r) j_0({\rm i}\kappa r)r^2{\rm d} r}-\frac{j_0^{m_2}(\kappa |y|)}{\int_0^R j_0^{m_2}(\kappa r)j_0({\rm i}\kappa r) r^2{\rm d} r}\right) j_{n}({\rm i}\kappa|y|)\overline{Y_{n}^{m}( {\hat y})}{\rm d}y\nonumber\\
&=-\kappa\sum_{n=0}^{\infty}\sum_{m=-n}^{n}h_{n}^{(1)}({\rm i}\kappa|x|)Y_{n}^{m}( {\hat x}) \Bigg(\frac{\int_{0}^{R} j_0^{m_1}(\kappa r)j_{n}({\rm i}\kappa r)r^2{\rm d}r\big(\int_{0}^{\pi}\int_{0}^{2\pi}\overline{Y_{n}^{m}(\theta,\phi)}\sin\theta{\rm d} \theta{\rm d} \phi\big)}{\int_0^R j_0^{m_1}(\kappa r) j_0({\rm i}\kappa r)r^2{\rm d} r}
\nonumber\\
&\hspace{2cm}-\frac{\int_{0}^{R} j_0^{m_2}(\kappa r)j_{n}({\rm i}\kappa r)r^2{\rm d}r\big(\int_{0}^{\pi}\int_{0}^{2\pi}\overline{Y_{n}^{m}(\theta,\phi)}\sin\theta{\rm d} \theta{\rm d} \phi\big)}{\int_0^R j_0^{m_2}(\kappa r)j_0({\rm i}\kappa r) r^2{\rm d} r}\Bigg) \nonumber\\
&=-\kappa h_{0}^{(1)}({\rm i}\kappa|x|)Y_{0}^{0}( {\hat x})2\sqrt{\pi} \left(\frac{\int_{0}^{R} j_0^{m_1}(\kappa r)j_{0}({\rm i}\kappa r)r^2{\rm d}r}{\int_0^R j_0^{m_1}(\kappa r)j_0({\rm i}\kappa r) r^2{\rm d} r}-\frac{\int_{0}^{R} j_0^{m_2}(\kappa r)j_{0}({\rm i}\kappa r)r^2{\rm d}r}{\int_0^R j_0^{m_2}(\kappa r)j_0({\rm i}\kappa r) r^2{\rm d} r}\right)=0,\quad |x|>R.
\end{align*}
Then, we obtain 
\begin{align}\label{3DGM_ex}
g_{\rm M}(x)=-\int_{B_{R}}\Phi_{\rm M}(x,y)f(y){\rm d}y=0,\quad |x|>R. 
\end{align}

Similarly, we can obtain from the integration by parts that 
\begin{align}\label{3DFM-7}
g_{\rm H}(x)&=-\int_{B_{R}}\Phi_{\rm H}(x,y)f(y){\rm d}y
=-\int_{B_{R}}\Phi_{\rm H}(x,y)\big((\Delta+\kappa^2)g_{\rm H}(y)\big){\rm d}y\nonumber\\
&=-\int_{B_{R}}\big((\Delta+\kappa^2)\Phi_{\rm H}(x,y)\big)g_{\rm H}(y){\rm d}y\nonumber\\
&\quad-\int_{\partial B_R}\big(\Phi_{\rm H}(x,y)\partial_\nu g_{\rm H}(y)-g_{\rm H}(y)\partial_{\nu(y)} \Phi_{\rm H}(x,y)\big){\rm d} s_{y}\nonumber\\
&=-\int_{B_{R}}\big((\Delta+\kappa^2)\Phi_{\rm H}(x,y)\big)g_{\rm H}(y){\rm d}y+0=0,\quad |x|>R.
\end{align}
Combining \eqref{3DGM_ex} and \eqref{3DFM-7}, we have
\begin{align*}%\label{FM-8}
u(x)=\frac{1}{2\kappa^2}(g_{\rm H}(x)-g_{\rm M}(x))=0,\quad |x|>R,
\end{align*}
which shows that $f(x)=(\Delta+\kappa^2)g_{\rm H}(x)$ is a nonradiating source for the three-dimensional biharmonic wave equation. 

Next, we demonstrate that $\alpha_n^m=\beta_n^m=0$ for all $n = 0,1,\dots, m = -n, \dots, n$. Noting 
\begin{align}\label{3Dbess-4}
\bigg[\frac{\rm d}{{\rm d} r}\bigg(r^2\frac{\rm d}{{\rm d} r}j_0(\kappa r)\bigg)+\kappa^2 r^2 j_0(\kappa r)\bigg]j_0^k(\kappa r)=0\times j_0^k (\kappa r)=0,\quad k\geq 2,
\end{align}
we get from the integration by parts that 
\begin{align}\label{3Dbess-7}
&-\int_{0}^{R}\big(\kappa^2 r^2 j_0(\kappa r)\big)j^k_{0}(\kappa r){\rm d}r
=\int_{0}^{R}\bigg[\bigg(\frac{\rm d}{{\rm d} r}\bigg(r^2\frac{{\rm d} j_{0}(\kappa r)}{{\rm d} r}\bigg)\bigg)j^k_{0}(\kappa r)\bigg]{\rm d}r\nonumber\\
&=\bigg(r^2\frac{{\rm d} j_{0}(\kappa r)}{{\rm d} r}\bigg)j^k_{0}(\kappa r)\bigg|_{0}^{R}
-\int_{0}^{R}\bigg[\bigg(r^2\frac{{\rm d} j_{0}(\kappa r)}{{\rm d} r}\bigg)\bigg(\frac{\rm d}{{\rm d} r}j^k_{0}(\kappa r)\bigg)\bigg]{\rm d}r\nonumber\\
&=-\int_{0}^{R}\bigg[\bigg(\frac{{\rm d} j_{0}(\kappa r)}{{\rm d} r}\bigg)\bigg(r^2\frac{\rm d}{{\rm d} r}j^k_{0}(\kappa r)\bigg)\bigg]{\rm d}r\nonumber\\
&=-j_{0}(\kappa r)\bigg(r^2\frac{\rm d}{{\rm d} r}j^k_{0}(\kappa r)\bigg)\bigg|_{0}^{R}
+\int_{0}^{R}\bigg[j_{0}(\kappa r)\bigg(\frac{\rm d}{{\rm d} r}\bigg(r^2\frac{\rm d}{{\rm d} r}j^k_{0}(\kappa r)\bigg)\bigg)\bigg]{\rm d}r\nonumber\\
&=-j_{0}(\kappa r)\bigg(r^2 k j_{0}^{k-1}(\kappa r)\frac{{\rm d} j_{0}(\kappa r)}{{\rm d} r}\bigg)\bigg|_{0}^{R}
+\int_{0}^{R}\bigg[j_{0}(\kappa r)\bigg(\frac{\rm d}{{\rm d} r}\bigg(r^2\frac{\rm d}{{\rm d} r}j^k_{0}(\kappa r)\bigg)\bigg)\bigg]{\rm d}r\nonumber\\
&=\int_{0}^{R}\bigg[j_{0}(\kappa r)\bigg(\frac{\rm d}{{\rm d} r}\bigg(r^2\frac{\rm d}{{\rm d} r}j^k_{0}(\kappa r)\bigg)\bigg)\bigg]{\rm d}r. 
\end{align}
Combining \eqref{3Dbess-4} and \eqref{3Dbess-7} leads to 
\begin{align}\label{3Dbess-8}
&\int_{0}^{R}\bigg[\bigg(\frac{\rm d}{{\rm d} r}\bigg(r^2\frac{\rm d}{{\rm d} r}\bigg)+\kappa^2 r^2\bigg)j_0^k (\kappa r)\bigg]j_{0}(\kappa r){\rm d}r\nonumber\\
&=\int_{0}^{R}\bigg[\frac{\rm d}{{\rm d} r}\bigg(r^2\frac{\rm d}{{\rm d} r}j_0(\kappa r)\bigg)+\kappa^2 r^2 j_0(\kappa r)\bigg]j_0^k(\kappa r){\rm d}r=0,\quad k\geq 2.
\end{align}
By \eqref{3Dbess-8}, for all $n = 0,1,\dots, m = -n, \dots, n$, we have 
\begin{align*} %\label{3Danm}
\alpha_n^m
&=\int_{0}^{R}\int_{0}^{\pi}\int_{0}^{2\pi} \Bigg[\frac{\frac{1}{r^2}\big(\frac{\rm d}{{\rm d} r}\big(r^2\frac{\rm d}{{\rm d} r}\big)+\kappa^2 r^2\big)j_0^{m_1}(\kappa r)}{\int_0^R j_0^{m_1}(\kappa r) j_0({\rm i}\kappa r)r^2{\rm d} r}-\frac{\frac{1}{r^2}\big(\frac{d}{d r}\big(r^2\frac{d}{d r}\big)
+\kappa^2 r^2\big)j_0^{m_2}(\kappa r)}{\int_0^R j_0^{m_2}(\kappa r)j_0({\rm i}\kappa r) r^2{\rm d} r}\Bigg]\nonumber\\
&\hspace{3cm}\times \overline{Y_{n}^{m}(\theta,\phi)} j_{n}(\kappa r)r^2\sin\theta{\rm d}\theta{\rm d}\phi{\rm d}r\nonumber\\
&= \frac{\int_{0}^{R}\big[\big(\frac{\rm d}{{\rm d} r}\big(r^2\frac{\rm d}{{\rm d} r}\big)
+\kappa^2 r^2\big)j_0^{m_1}(\kappa r)\big]j_{n}(\kappa r){\rm d}r\int_{0}^{\pi}\int_{0}^{2\pi}\overline{Y_{n}^{m}(\theta,\phi)} \sin\theta{\rm d}\theta{\rm d}\phi}{\int_0^R j_0^{m_1}(\kappa r) j_0({\rm i}\kappa r)r^2{\rm d} r}\nonumber\\
&\quad\quad-\frac{\int_{0}^{R}\big[\big(\frac{\rm d}{{\rm d} r}\big(r^2\frac{\rm d}{{\rm d} r}\big)
+\kappa^2 r^2\big)j_0^{m_2}(\kappa r)\big]j_{n}(\kappa r){\rm d}r\int_{0}^{\pi}\int_{0}^{2\pi}\overline{Y_{n}^{m}(\theta,\phi)} \sin\theta{\rm d}\theta{\rm d}\phi}{\int_0^R j_0^{m_2}(\kappa r)j_0({\rm i}\kappa r) r^2{\rm d} r}\nonumber\\
&= \frac{0}{\int_0^R j_0^{m_1}(\kappa r) j_0({\rm i}\kappa r)r^2{\rm d} r}
-\frac{0}{\int_0^R j_0^{m_2}(\kappa r)j_0({\rm i}\kappa r) r^2{\rm d} r}
=0.
\end{align*}

Noting 
\begin{align*}%\label{3Dbess}
\bigg[\frac{\rm d}{{\rm d} r}\bigg(r^2\frac{\rm d}{{\rm d} r}j_0({\rm i}\kappa r)\bigg)-\kappa^2 r^2 j_0({\rm i}\kappa r)\bigg]j_0^k(\kappa r) 
=0\times j_0^k(\kappa r)=0,
\end{align*}
for $k\geq 2$, we can similarly show 
\begin{align*}%\label{3Dbess-1}
\int_{0}^{R}\big(\kappa^2 r^2 j_0^k(\kappa r)\big)j_{0}({\rm i}\kappa r){\rm d}r
=\int_{0}^{R}\bigg[\frac{\rm d}{{\rm d} r}\bigg(r^2\frac{\rm d}{{\rm d} r}j_{0}^k(\kappa r)\bigg)\bigg]j_0({\rm i}\kappa r){\rm d}r,
\end{align*}
which implies
\begin{align}\label{3Dbess-2}
\int_{0}^{R}\bigg[\bigg(\frac{\rm d}{{\rm d} r}\bigg(r^2\frac{\rm d}{{\rm d} r}\bigg)+\kappa^2 r^2\bigg)j_0^k (\kappa r)\bigg]j_{0}({\rm i}\kappa r){\rm d}r
=2\kappa^2\int_{0}^{R}  j_0^k (\kappa r)j_0({\rm i}\kappa r)r^2{\rm d}r. 
\end{align}
Then, from \eqref{3Dbess-2}, we have
\begin{align*} %\label{3Dbnm}
\beta_n^m
&=\int_{0}^{R}\int_{0}^{\pi}\int_{0}^{2\pi} \Bigg[\frac{\frac{1}{r^2}\big(\frac{\rm d}{{\rm d} r}\big(r^2\frac{\rm d}{{\rm d} r}\big)+\kappa^2 r^2\big)j_0^{m_1}(\kappa r)}{\int_0^R j_0^{m_1}(\kappa r) j_0({\rm i}\kappa r)r^2{\rm d} r}-\frac{\frac{1}{r^2}\big(\frac{\rm d}{{\rm d} r}\big(r^2\frac{\rm d}{{\rm d} r}\big)
+\kappa^2 r^2\big)j_0^{m_2}(\kappa r)}{\int_0^R j_0^{m_2}(\kappa r)j_0({\rm i}\kappa r) r^2{\rm d} r}\Bigg]\nonumber\\
&\hspace{2cm}\times \overline{Y_{n}^{m}(\theta,\phi)} j_{n}({\rm i}\kappa r)r^2\sin\theta{\rm d}\theta{\rm d}\phi{\rm d}r\nonumber\\
&= \Bigg[\frac{\int_{0}^{R}\big[\frac{1}{r^2}\big(\frac{\rm d}{{\rm d} r}\big(r^2\frac{\rm d}{{\rm d} r}\big)
+\kappa^2 r^2\big)j_0^{m_1}(\kappa r)\big]j_{n}({\rm i}\kappa r)r^2{\rm d}r}{\int_0^R j_0^{m_1}(\kappa r) j_0({\rm i}\kappa r)r^2{\rm d} r}\\
&\quad\quad-\frac{\int_{0}^{R}\big[\frac{1}{r^2}\big(\frac{\rm d}{{\rm d} r}\big(r^2\frac{\rm d}{{\rm d} r}\big)
+\kappa^2 r^2\big)j_0^{m_2}(\kappa r)\big]j_{n}({\rm i}\kappa r)r^2{\rm d}r}{\int_0^R j_0^{m_2}(\kappa r)j_0({\rm i}\kappa r) r^2{\rm d} r}\Bigg]
 \int_{0}^{\pi}\int_{0}^{2\pi}\overline{Y_{n}^{m}(\theta,\phi)} \sin\theta{\rm d}\theta{\rm d}\phi,
\end{align*}
which implies 
\begin{align*}% \label{3Dbnm-1}
\beta_n^m
&= \Bigg[\frac{\int_{0}^{R}\big[\frac{1}{r^2}\big(\frac{\rm d}{{\rm d} r}\big(r^2\frac{\rm d}{{\rm d} r}\big)
+\kappa^2 r^2\big)j_0^{m_1}(\kappa r)\big]j_{n}({\rm i}\kappa r)r^2{\rm d}r}{\int_0^R j_0^{m_1}(\kappa r) j_0({\rm i}\kappa r)r^2{\rm d} r}\nonumber\\
&\quad\quad
-\frac{\int_{0}^{R}\big[\frac{1}{r^2}\big(\frac{\rm d}{{\rm d} r}\big(r^2\frac{\rm d}{{\rm d} r}\big)
+\kappa^2 r^2\big)j_0^{m_2}(\kappa r)\big]j_{n}({\rm i}\kappa r)r^2{\rm d}r}{\int_0^R j_0^{m_2}(\kappa r)j_0({\rm i}\kappa r) r^2{\rm d} r}\Bigg]\times 0
=0,\quad n\neq 0,~m\neq 0
\end{align*}
and
\begin{align*}% \label{3Dbnm-2}
\beta_0^0
&= \Bigg[\frac{\int_{0}^{R}\big[\frac{1}{r^2}\big(\frac{\rm d}{{\rm d} r}\big(r^2\frac{\rm d}{{\rm d} r}\big)
+\kappa^2 r^2\big)j_0^{m_1}(\kappa r)\big]j_{0}({\rm i}\kappa r)r^2{\rm d}r}{\int_0^R j_0^{m_1}(\kappa r) j_0({\rm i}\kappa r)r^2{\rm d} r}\\
&\quad \quad -\frac{\int_{0}^{R}\big[\frac{1}{r^2}\big(\frac{\rm d}{{\rm d} r}\big(r^2\frac{\rm d}{{\rm d} r}\big)
+\kappa^2 r^2\big)j_0^{m_2}(\kappa r)\big]j_{0}({\rm i}\kappa r)r^2{\rm d}r}{\int_0^R j_0^{m_2}(\kappa r)j_0({\rm i}\kappa r) r^2{\rm d} r}\Bigg] \int_{0}^{\pi}\int_{0}^{2\pi}\overline{Y_{0}^{0}(\theta,\phi)} \sin\theta{\rm d}\theta{\rm d}\phi\nonumber\\
&= 2\sqrt{\pi}\Bigg[\frac{\int_{0}^{R}\big[\big(\frac{\rm d}{{\rm d} r}\big(r^2\frac{\rm d}{{\rm d} r}\big)
+\kappa^2 r^2\big)j_0^{m_1}(\kappa r)\big]j_{0}({\rm i}\kappa r){\rm d}r}{\int_0^R j_0^{m_1}(\kappa r) j_0({\rm i}\kappa r)r^2{\rm d} r}\\
&\quad\quad\quad\quad
 -\frac{\int_{0}^{R}\big[\big(\frac{\rm d}{{\rm d} r}\big(r^2\frac{\rm d}{{\rm d} r}\big)
+\kappa^2 r^2\big)j_0^{m_2}(\kappa r)\big]j_{0}({\rm i}\kappa r){\rm d}r}{\int_0^R j_0^{m_2}(\kappa r)j_0({\rm i}\kappa r) r^2{\rm d} r}\Bigg]\nonumber\\
&= 2\sqrt{\pi}\Bigg[\frac{2\kappa^2\int_0^R j_0^{m_1}(\kappa r) j_0({\rm i}\kappa r)r^2{\rm d} r}{\int_0^R j_0^{m_1}(\kappa r) j_0({\rm i}\kappa r)r^2{\rm d} r}
-\frac{2\kappa^2\int_0^R j_0^{m_2}(\kappa r) j_0({\rm i}\kappa r)r^2{\rm d} r}{\int_0^R j_0^{m_2}(\kappa r)j_0({\rm i}\kappa r) r^2{\rm d} r}\Bigg]=0.
\end{align*}

\section{Conclusion}\label{S:co}

In this study, we undertake a thorough investigation on nonradiating sources of the two- and three-dimensional biharmonic wave equation. The goal is to reveal both their existence and inherent characteristics. Nonradiating sources, defined as sources that result in a field being identically zero beyond a finite region, share a close relationship with the issue of nonuniqueness in the inverse source problem. Specifically, we demonstrate that a source conforms to nonradiating behavior in the biharmonic wave equation if and only if it satisfies the condition of being a nonradiating source of both the Helmholtz equation and the modified Helmholtz equation. By utilizing the integral transform and the series representation of the fundamental solution, we deduce fundamental identities, which establish connections between the integral transform of the source and the near-field data through the integral representation of the solution for the biharmonic wave equation. We provide these conditions, which serve as valuable criteria for assessing the nonradiating attributes of sources, enabling researchers to facilitate the more efficient identification and characterization of nonradiating sources. Furthermore, we demonstrate that the integral operator possesses a nontrivial null space, constituting the collection of nonradiating sources. This observation holds implications for the nonuniqueness inherent in the inverse source problem.

\end{document}